\newtheorem{theorem}{Theorem}[section]
\newtheorem{thm}[theorem]{Theorem}
\newtheorem{cor}[theorem]{Corollary}
\newtheorem{prop}[theorem]{Proposition}
\theoremstyle{definition}
\numberwithin{equation}{theorem}
\DeclareMathAlphabet{\mathpzc}{OT1}{pzc}{m}{it}
\DeclareMathOperator{\kar}{char}
\newcommand{\bigperp}{%
  \mathop{\mathpalette\bigp@rp\relax}%
  \displaylimits
}
\newcommand{\bigp@rp}[2]{%
  \vcenter{
    \m@th\hbox{\scalebox{\ifx#1\displaystyle2.1\else1.5\fi}{$#1\perp$}}
  }%
}
\begin{document}

\newcommand{\str}{{\mathtt{str}}}
\newcommand{\Trd}{{\mathrm{Trd}}}
\newcommand{\rad}{{\mathrm{rad}}}
\newcommand{\id}{{\mathrm{id}}}
\newcommand{\Ad}{{\mathrm{Ad}}}
\newcommand{\Ker}{{\mathrm{Ker}}}
\newcommand{\wedges}[1]{\d b_1\wedge\ldots\wedge \d b_{#1}}
\newcommand{\rank}{{\mathrm{rank}}}
\renewcommand{\dim}{{\mathrm{dim}}}
\newcommand{\coker}{{\mathrm{Coker}}}
\newcommand{\can}{\overline{\rule{2.5mm}{0mm}\rule{0mm}{4pt}}}
\newcommand{\End}{{\mathrm{End}}}
\newcommand{\Sand}{{\mathrm{Sand}}}
\newcommand{\Hom}{{\mathrm{Hom}}}
\newcommand{\Nrd}{{\mathrm{Nrd}}}
\newcommand{\Srd}{{\mathrm{Srd}}}
\newcommand{\ad}{{\mathrm{ad}}}
\newcommand{\rk}{{\mathrm{rk}}}
\newcommand{\Mon}{{\mathrm{Mon}}}
\newcommand{\disc}{{\mathrm{disc}}}
\newcommand{\Sym}{{\mathrm{Sym}}}
\newcommand{\Skew}{{\mathrm{Skew}}}
\newcommand{\Nrp}{{\mathrm{Nrp}}}
\newcommand{\Trp}{{\mathrm{Trp}}}
\newcommand{\Symd}{{\mathrm{Symd}}}
\renewcommand{\dir}{{\mathrm{dir}}}
\renewcommand{\geq}{\geqslant}
\renewcommand{\leq}{\leqslant}
\newcommand{\an}{{\mathrm{an}}}
\renewcommand{\Im}{{\mathrm{Im}}}
\newcommand{\Int}{{\mathrm{Int}}}
\renewcommand{\d}{{\mathrm{d}}}
\newcommand{\qp}[2]{\mbox{$#1 \otimes^\mathrm{qp}#2 $}}
\newcommand{\qf}[1]{\mbox{$\langle #1\rangle $}}
\newcommand{\pff}[1]{\mbox{$\langle\!\langle #1
\rangle\!\rangle $}}
\newcommand{\pfr}[1]{\mbox{$\langle\!\langle #1 ]]$}}
\newcommand{\HH}{{\mathbb H}}
\newcommand{\s}{{\sigma}}
\newcommand{\lra}{{\longrightarrow}}
\newcommand{\ZZ}{{\mathbb Z}}
\newcommand{\NN}{{\mathbb N}}
\newcommand{\FF}{{\mathbb F}}
\newcommand{\PERP}{\mbox{\raisebox{-.5ex}{{\Huge $\perp$}}}}
\newcommand{\Perp}{\mbox{\raisebox{-.2ex}{{\Large $\perp$}}}}
\newcommand{\M}[1]{\mathbb{M}( #1)}

\newcommand{\vf}{\varphi}
\newcommand{\mg}[1]{#1^{\times}}

\title{Totally decomposable symplectic and unitary involutions}
\email{Andrew.Dolphin@uantwerpen.be}
\address{Universiteit Antwerpen, Departement Wiskunde-Informatica, Middelheim\-laan~1, 2020 Antwerpen, Belgium.}
\author{Andrew Dolphin}
%\address{Departement Wiskunde-Informatica, Universiteit Antwerpen, Belgium}
\thanks{This work was supported by the {Deutsche Forschungsgemeinschaft} (project \emph{The Pfister Factor Conjecture in Characteristic Two}, BE 2614/4) and  the FWO Odysseus programme (project \emph{Explicit Methods in Quadratic Form Theory})}% and the Fonds Sp\'eciaux de Recherche (postdoctoral funding) through the Universit\'e Catholique de Louvain.}
%\thanks{2010 \emph{Mathematics Subject Classification}. Primary 11E39; Secondary 11E81; 12F05; 12F10. \emph{Key words and phrases.} Central simple algebras; involutions; quadratic pairs; characteristic two; unitary involutions; Pfister Factor Conjecture. }

\begin{abstract} We study totally decomposable symplectic and unitary  involutions on  central simple algebras of index $2$ and on  split central simple algebras respectively.  We show that for every field extension, these involutions are either anisotropic or hyperbolic after extending scalars, and that the converse holds if the algebras are of $2$-power  degree. These results are new in characteristic $2$, otherwise were shown in  \cite{Becher:qfconj} and \cite{Black:inv} respectively.

\medskip\noindent
\emph{Keywords:} Central simple algebras, involutions, characteristic two, hermitian forms, quadratic forms. 

\medskip\noindent
\emph{Mathematics Subject Classification (MSC 2010):} 16W10, 16K20, 11E39, 11E81, 12F05. 
\end{abstract}

\maketitle

\section{Introduction}

%In  \cite{Becher:qfconj} it is shown that over fields of characteristic different from two, a split algebra with orthogonal  involution that decomposes into the tensor product of quaternion algebras with involution is adjoint to a Pfister form. This result is known as the Pfister Factor Conjecture. 
%

An algebra with involution is totally decomposable if it is isomorphic to a tensor product of quaternion algebras with involution. Over fields of characteristic different from two, the adjoint involution of a Pfister form is a totally decomposable involution.
% a totally decomposable  involution of orthogonal type on a tensor product of split quaternion algebras  is isomorphic to the adjoint involution  of a  Pfister form, and the converse also holds.  
Pfister forms are a central part of the algebraic theory of quadratic forms, and in  \cite{parimala:pfisterinv} it was asked whether totally decomposable involutions share certain characterising properties of Pfister forms. In particular, whether totally decomposable involutions are exactly those involutions on algebras of two-power degree that are either anisotropic or hyperbolic after extending scalars. 
That  totally decomposable orthogonal involutions over a field of characteristic different from two are always either anisotropic or hyperbolic can be shown (see  \cite[(3.2)]{Black:inv}) using the 
 non-hyperbolic splitting results of Karpenko, \cite{karpenko:hyporth}, and that any totally decomposable orthogonal  involution on a split algebra is adjoint to a Pfister form. The later result is known  as the Pfister Factor Conjecture, and was shown in  \cite{Becher:qfconj}. 
 The converse, that any orthogonal involution on an algebra of two-power degree that is anisotropic or hyperbolic after extending scalars to any field extension is necessarily  totally decomposable, is clear for split algebras, but  otherwise  remains largely open.
In   \cite{dolphin:orthpfist}, this question is considered for orthogonal involutions in characteristic two, whose behaviour is somewhat unusual.

Here we consider analogues  of the property that a   totally decomposable  orthogonal involution  on a split algebra is  either anisotropic or hyperbolic, and its converse, for symplectic and unitary involutions.
 As every split symplectic involution is hyperbolic, and hence the question is trivial in this case, we consider symplectic involutions on an algebra Brauer equivalent to a quaternion algebra. 
 We show that for every field extension, totally decomposable symplectic involutions on index two algebras and unitary involutions on split algebras  are either anisotropic or hyperbolic after extending scalars, and that the converse holds if the algebras are of two-power  degree.
These results were first proven in  \cite{Becher:qfconj} and \cite[(3.1)]{Black:inv} respectively under the assumption that the base field was of characteristic different from two. Here we make no assumption on the characteristic of the base field.

Our approach follows that used in   \cite{Becher:qfconj}. The  main tool is  an index reduction argument using the 
 the so-called trace forms of Jacobson, first introduced for characteristic different from two in  \cite{jacobson:hermformstr}. These trace forms give a  correspondence between hermitian forms over a quaternion algebra  or a quadratic separable extension with the respective canonical involutions  and certain quadratic forms over the base field. In sections \S\ref{section:jacobson} and \S\ref{jacanal} we consider versions of these results that hold in any characteristic and derive analogous statements for involutions.
%The main result on symplectic involutions,  from which our results on  unitary involutions is  then  derived,   follows from these results and an analog of the Pfister Factor Conjecture, our second main tool.

%As we work over fields of arbitrary characteristic,  quadratic forms and symmetric bilinear forms are not equivalent objects. Consequently, one has two different notions of a Pfister form, and in particular two natural analogs  of the Pfister Factor Conjecture. One in terms of bilinear Pfister forms with corresponding  orthogonal involutions, studied in \cite{dolphin:orthpfist}, and the other in terms of  quadratic Pfister forms with corresponding  quadratic pairs, studied in \cite{dolphin:PFC}.  
%It is this latter result on quadratic pairs that is used to show our main result on symplectic involutions. 

\section{Algebras with involution}\label{section:basicsalgs}

Throughout, let $F$ be a field,  $\kar(F)$ denote its characteristic and  $F^\times$ denote its multiplicative group. 
%For two objects $\alpha$ and $\beta$ in a certain category, we write $\alpha\simeq\beta$ to indicate that they are \emph{isomorphic}, i.e.~that there exists an isomorphism between them.
%This applies in particular to algebras with involution or with quadratic pair, but also to quadratic  and hermitian forms, where the corresponding isomorphisms are usually called \emph{isometries}.
We refer to \cite{pierce:1982} as a general reference  on finite-dimensional algebras over fields, and for central simple algebras in particular, and to \cite{Knus:1998} for involutions.
Let $A$ be an (associative) $F$-algebra.  We denote the centre of $A$ by $Z(A)$.
For a field extension $K/F$, the $K$-algebra $A\otimes_F K$ is denoted by $A_K$.
%For $a\in A^\times$ we denote  by $\Int(a):A\lra A$ the \emph{inner automorphism}  given by $c\mapsto aca^{-1}$. 
%An element $e\in A$ is called an \emph{idempotent} if $e^2=e$.
 An $F$-\emph{involution} on $A$ is an $F$-linear map $\sigma:A\rightarrow A$ such that  $\sigma(xy)=\sigma(y)\sigma(x)$ for all $x,y\in A$ and  $\sigma^2=\textrm{id}_A$.

%In this section we recall the basic definitions and results we use on central simple algebras with involution. 

Assume now that $A$ is finite-dimensional and simple  (i.e.~it has no nontrivial two sided ideals).
Then $Z(A)$ is a field, and  by Wedderburn's Theorem (see \cite[(1.1)]{Knus:1998}) we have  $A\simeq\End_D(V)$ for an  $F$-division algebra $D$  and a right $D$-vector space $V$, and furthermore $\dim_{Z(A)}(A)$ is a square number, whose positive square root is called the \emph{degree of $A$} and is denoted $\mathrm{deg}(A)$. The degree of $D$ is called the \emph{index of $A$} and is denoted $\mathrm{ind}(A)$. We call $A$ \emph{split} if $\mathrm{ind}(A)=1$. 
We call a field extension $K/F$ a \emph{splitting field of $A$} if $A_K$ is split.  If $Z(A)=F$, then we call the $F$-algebra $A$ \emph{central simple}.  Two central simple $F$-algebras $A$ and $B$ are called \emph{Brauer equivalent} if $A$ and $B$ are isomorphic to endomorphism algebras of two right vector spaces over the same $F$-division algebra. 
If $A$ is a central simple $F$-algebra let $\Trd_A:A\lra F$ denote the reduced trace map  and $\Nrd_A:A\lra F$ the reduced norm map (see \cite[(1.6)]{Knus:1998} for the definitions).

 An \emph{$F$-algebra with involution} is a pair $(A,\sigma)$ of a finite-dimensional $F$-algebra $A$ and an $F$-involution $\sigma$ on $A$ such that one has $F=\{x \in Z(A) \mid \sigma(x)=x\}$, and such that either $A$ is simple or $A$ is a product of two simple $F$-algebras that are mapped to one  another by $\sigma$. In this situation there are two possibilities: either $Z(A)=F$, so that $A$ is a central simple $F$-algebra, or $Z(A)/F$ is a quadratic \'etale extension with $\sigma$ restricting to the nontrivial $F$-automorphism of $Z(A)$. To distinguish these two situations, we speak of  algebras with involution of the \emph{first} and \emph{second kind}:  we say that the $F$-algebra with involution $(A,\sigma)$ is of the \emph{first kind} if $Z(A)=F$ and of the \emph{second kind} otherwise. For more information on involutions of the second kind, also called \emph{unitary involutions}, we refer to  \cite[\S 2.B]{Knus:1998}.  Involutions of the first kind are divided into two \emph{types}, \emph{orthogonal} and \emph{symplectic} (see \S\ref{section:her}).

Let $(A,\s)$ be an $F$-algebra with involution.
If $Z(A)$ is a field, then $A$ is a central simple $Z(A)$-algebra, and we say  $(A,\s)$ is \emph{split} if $A$ is split as $Z(A)$-algebra.
If $Z(A)\simeq F\times F$, then $(A,\s)\simeq (B\times B^{\mathsf{op}},\epsilon)$ where $B$ is a central simple $F$-algebra, $B^{\mathsf{op}}$ is its opposite algebra and $\epsilon$ is the map exchanging the components of elements of $B\times B^{\mathsf{op}}$; in this case we say  $(A,\s)$ is \emph{split} if $B$ is split as an $F$-algebra.
Given a field extension $K/F$, we abbreviate  $\sigma_K=\sigma\otimes \textrm{id}_K$ and $(A,\sigma)_K=(A_K,\sigma_K)$ is a $K$-algebra with involution. We call $K$ a \emph{splitting field} of $(A,\sigma)$ if $(A,\s)_K$ is split.

Let  $(A,\s)$ and $(B,\tau)$ be $F$-algebras with involution.
Letting $(\sigma\otimes\tau)(a\otimes b)=\s(a)\otimes \tau(b)$ for $a\in A$ and $b\in B$ determines an $F$-involution  $\s\otimes \tau$ on the $F$-algebra $A\otimes_F B$. We denote the pair $(A\otimes_F B,\sigma\otimes \tau)$ by $(A,\sigma)\otimes(B,\tau)$.
By an \emph{isomorphism of $F$-algebras with involution}   we mean an $F$-algebra isomorphism $\Phi: A\rightarrow B$ satisfying $\Phi\circ\sigma=\tau\circ\Phi$. %If there exists an isomorphism  $\Phi:(A,\sigma)\rightarrow(B,\tau)$, then we say  the $F$-algebras with involution $(A,\sigma)$ and $(B,\tau)$ are \emph{isomorphic}.
Let $(A,\s)$ be an $F$-algebra with involution with centre $K$.
We call  $(A,\s)$ \emph{totally decomposable} if there exists an $n\in\mathbb{N}$ and  $F$-quaternion algebras with involution $(Q_1,\s_1),\ldots, (Q_n,\s_n)$ with common centre $K$ such that $(A,\sigma)\simeq \bigotimes_{i=1}^n (Q_i,\sigma_i)$.

 We call  $(A,\sigma)$ \emph{isotropic} if there exists an element $a\in A\setminus\{0\}$ such that $\sigma(a)a=0$, and \emph{anisotropic} otherwise.
 We call an idempotent $e$ of $A$ \emph{hyperbolic with respect to $\sigma$} if  $\sigma(e)=1-e$. We call an $F$-algebra with involution $(A,\sigma)$ \emph{hyperbolic}  if $A$ contains a hyperbolic idempotent with respect to $\sigma$. By  \cite[(12.35)]{Knus:1998} we have:
%An idempotent  $e\in A$ is called  \emph{metabolic with respect to $\sigma$}  if $\sigma(e)e=0$ and $\dim_FeA=\frac{1}{2}\dim_F A$. We call $(A,\sigma)$   \emph{metabolic}  if $A$ contains a  metabolic idempotent element with respect to $\sigma$. If $\kar(F)\neq2$ or $(A,\s)$ is symplectic or unitary, then $(A,\s)$ is metabolic if and only if it is hyperbolic (see \cite[4.10]{dolphin:metainv} and \cite[A.3]{Tignol:galcohomgps}).

\begin{prop}\label{cor:allhypsame}
Let $(A,\s_1)$ and $(A,\s_2)$ be hyperbolic $F$-algebras with involution such that $\s_1|_{Z(A)}=\s_2|_{Z(A)}$. Then $(A,\s_1)\simeq(A,\s_2)$.
\end{prop}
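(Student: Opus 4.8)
The plan is to deduce this from \cite[(12.35)]{Knus:1998}; below I sketch the reasoning behind that result. I would argue according to the dichotomy for $Z(A)$ recalled above: either $Z(A)\simeq F\times F$, or $Z(A)$ is a field.

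Suppose first $Z(A)\simeq F\times F$. By the structure recalled above, each $(A,\s_i)$ is isomorphic to an exchange algebra $(B_i\times B_i^{\mathsf{op}},\epsilon)$ for some central simple $F$-algebra $B_i$. Since $A\simeq B_1\times B_1^{\mathsf{op}}\simeq B_2\times B_2^{\mathsf{op}}$ as $F$-algebras, comparing the two simple components of $A$ gives $B_1\simeq B_2$ or $B_1\simeq B_2^{\mathsf{op}}$, and in either case $(B_1\times B_1^{\mathsf{op}},\epsilon)\simeq(B_2\times B_2^{\mathsf{op}},\epsilon)$, swapping the two factors if necessary; hence $(A,\s_1)\simeq(A,\s_2)$. (Both involutions here are automatically hyperbolic, via the idempotent $(1,0)$, so the hypothesis plays no role.)

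Now suppose $K:=Z(A)$ is a field, so that $A$ is central simple over $K$; write $A\simeq\End_D(V)$ with $D$ a central division $K$-algebra and $V$ a finite-dimensional right $D$-vector space. I would fix an involution $\theta_0$ on $D$ of the same kind as the $\s_i$ and with $\theta_0|_K=\s_i|_K$ (which is automatic when the kind is the first, as then $K=F$), and invoke the standard correspondence between involutions and hermitian forms: each $\s_i$ is of the form $\ad_{h_i}$ for a nonsingular $\varepsilon_i$-hermitian form $h_i$ on $V$ over $(D,\theta_0)$; two such $\ad_h$ and $\ad_{h'}$ are isomorphic as $F$-algebras with involution exactly when $h$ and $h'$ are similar; and $\s_i$ is hyperbolic if and only if $h_i$ is, the hyperbolic idempotent corresponding to the projection onto a totally isotropic subspace of $V$ of $D$-dimension $\tfrac{1}{2}\dim_D V$. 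Moreover $\varepsilon_1=\varepsilon_2$: this is vacuous for the second kind, and for the first kind it holds because a hyperbolic involution of the first kind is symplectic — in characteristic $2$ necessarily so, as over any splitting field of $A$ it becomes adjoint to an alternating bilinear form. Since a hyperbolic $\varepsilon$-hermitian form over $(D,\theta_0)$ is determined up to isometry by its rank, and $h_1,h_2$ both have rank $\dim_D V$, we conclude $h_1\simeq h_2$, whence $(A,\s_1)\simeq(A,\s_2)$.

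The step I expect to be the real work is the last one in the field case: establishing the dictionary between hyperbolic idempotents for $\s$ and half-dimensional totally isotropic subspaces for $h$, and proving that hyperbolic $\varepsilon$-hermitian forms over $(D,\theta_0)$ of a fixed rank are isometric. In characteristic $2$ both require care with the distinction between symmetric and alternating forms, and it is precisely this content that is recorded in \cite[(12.35)]{Knus:1998}.
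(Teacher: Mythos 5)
The paper offers no argument of its own here: the statement is taken verbatim from the cited \cite[(12.35)]{Knus:1998}, so there is nothing to ``match.'' Your proposal is a reasonable reconstruction of what lies behind that citation, and the structure — split the two cases for $Z(A)$, handle $F\times F$ by the uniqueness of the exchange involution, then in the field case translate involutions to hermitian forms, reduce to the uniqueness of the hyperbolic form of a given rank — is indeed the right outline. The $F\times F$ case is correct as written.

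There is, however, a genuine error in the field case. You assert that $\varepsilon_1=\varepsilon_2$ because ``a hyperbolic involution of the first kind is symplectic.'' That is true in characteristic $2$ (if $\sigma(e)=1-e$ then $1=e+\sigma(e)\in\Symd(A,\sigma)$, so $\sigma$ is symplectic), and your splitting-field justification is fine there. But in characteristic $\neq 2$ it is false: $\Ad(\qf{1,-1})$ on $M_2(F)$ is a hyperbolic \emph{orthogonal} involution (e.g.\ $e=\tfrac12\left(\begin{smallmatrix}1&1\\1&1\end{smallmatrix}\right)$ satisfies $e^2=e$ and $\ad_{\qf{1,-1}}(e)=1-e$), while the canonical involution on $M_2(F)$ is a hyperbolic symplectic one, and these are not isomorphic. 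So your route to $\varepsilon_1=\varepsilon_2$ does not work, and in fact the displayed statement with no further hypothesis is false in characteristic $\neq 2$ for exactly this reason. The result in \cite[(12.35)]{Knus:1998} is about hyperbolic involutions \emph{of the same type}; with that hypothesis (which holds in every place the paper invokes this proposition, since both involutions there are symplectic or both are unitary) the equality $\varepsilon_1=\varepsilon_2$ is immediate and the rest of your sketch goes through. You should replace the ``hyperbolic $\Rightarrow$ symplectic'' claim by the same-type hypothesis rather than try to derive it.
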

%\begin{proof}  See  \cite[(12.35)]{Knus:1998}. 
%Note that if $\kar(F)=2$, the hypothesis implies that if $(A,\s_i)$ is of the first kind then it is symplectic  by \cite[(6.7)]{Knus:1998}. By  \cite[(4.2)]{Knus:1998}, for $i\in\{1,2\}$ there exist even $\lambda$-hermitian forms $\varphi_i$ of the same dimension over some $F$-division algebra with involution  such that $(A,\s_i)\simeq\Ad(\varphi_i)
%$. As $\varphi_1$ and $\varphi_2$  are hyperbolic by \cref{prop:hypiff}$(2)$, we have   $\varphi_1\simeq\varphi_2$ by \cref{prop:hypuniq}. Hence $(A,\s_1)\simeq(A,\s_2)$.
%\end{proof}

Let $(A,\sigma)$ be an $F$-algebra with involution. For $\lambda \in Z(A)$ such that $\lambda\sigma(\lambda)=1$, let $\mathrm{Sym}_{\lambda}(A,\sigma)= \{ a\in A\mid \lambda\sigma(a)=a\}$  and  $\Symd_\lambda(A,\sigma)= \{ a+\lambda\sigma( a)\,|\, a\in A\}$. These are $F$-linear subspaces of $A$ and 
we write $\mathrm{Sym}(A,\sigma)= \mathrm{Sym}_1(A,\sigma)$ and $\Symd(A,\sigma)= \Symd_{1}(A,\sigma)$.

We also need to consider quadratic pairs for our main results. %Orthogonal involutions correspond after splitting to symmetric bilinear forms, up to similarity.
%In characteristic different from two, symmetric bilinear forms and quadratic forms are equivalent concepts, but this is not the case in characteristic two.
%To obtain a type of object corresponding to quadratic forms in an analogous  way to how involutions correspond to symmetric bilinear forms, the concept of a quadratic pair was introduced in  \cite[\S5]{Knus:1998}.
 Let $(A,\s)$ be an $F$-algebra with involution of the first kind.
We call an $F$-linear map $f:\Sym(A,\sigma)\rightarrow F$ a \emph{semi-trace on $(A,\sigma)$} if it satisfies $f(x+\sigma(x)) = \mathrm{Trd}_A(x)$  for all $x\in A$. %By \cite[(4.3)]{dolphin:quadpairs}, if $\kar(F)\neq2$, then $\frac{1}{2}\Trd_A|_{\Sym(A,\sigma)}$ is the unique semi-trace on $(A,\sigma)$. 
If  $\kar(F)=2$, then the existence of a semi-trace on $(A,\sigma)$ implies that $\Trd_A(\Sym(A,\sigma))=\{0\}$ and hence by \cite[(2.6)]{Knus:1998} that $(A,\sigma)$  is symplectic.  
%
%Given an element $\ell\in A$ with $\ell+\s(\ell)=1$, let $$\str(A,\sigma, \ell):\Sym(A,\sigma)\rightarrow F$$ denote the map given by  $x\mapsto \Trd_A(\ell x)$.  We drop the $(A,\s)$ from the notation when this is clear from context and just write $\str(\ell)$. 
%For any $\ell\in A$ with $\ell+\sigma(\ell)=1$, the map $\str(A,\s,\ell)$ is a semi-trace on $(A,\sigma)$. Conversely, every semi-trace on $(A,\sigma)$ is of this form by \cite[(5.7)]{Knus:1998} (Symdhough there the case where $\kar(F)\neq 2$ and $(A,\s)$ is symplectic is excluded, the same proof applies).
%For another element $\ell'\in A$ such that $\ell'+\s(\ell')=1$, we have that 
% $\str(A,\s,\ell)=\str(A,\s,\ell')$ if and only if $\ell-\ell'\in\Symd(A,\s)$ (see \cite[(5.7)]{Knus:1998}). 
%
An \emph{$F$-algebra with quadratic pair} is a triple $(A,\sigma,f)$ where $(A,\sigma)$ is an $F$-algebra with involution of the first kind, assumed to be orthogonal if $\kar(F)\neq 2$ and symplectic if $\kar(F)=2$, and where $f$ is a semi-trace on $(A,\sigma)$.
If $\kar(F)\neq2$, then the semi-trace $f$ is uniquely determined  by $(A,\sigma)$ (see \cite[p.56]{Knus:1998}).
Hence in characteristic different from $2$ the concept of an algebra with quadratic pair is  equivalent to the concept of an algebra with orthogonal involution. 
An isomorphism of quadratic pairs $(A,\sigma,f)$ and $(B,\tau,g)$ is an isomorphism $\Phi$ of the underlying $F$-algebras with involution satisfying $ f=g\circ\Phi$.

Let $(A,\s)$ and $(B,\tau)$ be $F$-algebras with involution of the first kind and let $f$ be a semi-trace on $(A,\s)$.
There is a unique semi-trace $g$ on $(B,\tau)\otimes (A,\s)$ such that $g(b\otimes a) =  \Trd_B(b)\cdot f(a)$
for all $a\in\Sym(A,\s)$ and $b\in \Sym(B,\tau)$ (see  \cite[(5.18)]{Knus:1998} for the case of $\kar(F)=2$, otherwise this is trivial). 
Assume that $(A,\s)$ and $(B,\tau)$  are orthogonal if $\kar(F)\neq2$. Then by \cite[(2.23)]{Knus:1998}, $(B,\tau)\otimes (A,\s)$  is orthogonal if $\kar(F)\neq 2$ and symplectic if $\kar(F)=2$.
Hence   we obtain an $F$-algebra with quadratic pair $( B\otimes_F A, \tau\otimes \s,g)$, which we denote by $(B,\tau)\otimes(A,\sigma, f)$.
  By \cite[(5.3)]{dolphin:PFC}, the tensor product of algebras with involution and the tensor product of an algebra with involution and an algebra with quadratic pair are mutually associative operations. In particular, for an $F$-algebra with quadratic pair $(A,\s,f)$ and $F$-algebras with involution of the first kind $(B,\tau)$ and $(C,\gamma)$, we may write $(C,\gamma)\otimes(B,\tau)\otimes(A,\s,f)$ without any ambiguity.
  
Assume now that $(A,\s)$ and $(B,\tau)$ are symplectic. 
We may  define a semi-trace $h$ on $(B,\tau)\otimes (A,\s)$ in the following manner.
If $\kar(F)=2$, then by \cite[(5.20)]{Knus:1998} there exists a unique semi-trace $h$ on 
 $(B,\tau)\otimes(A,\s)$ such that $h(s_1\otimes s_2)=0$ for all $s_1\in \Sym(A,\s)$ and $s_2\in \Sym(B,\tau)$. 
  If $\kar(F)\neq2$, then  $(B,\tau)\otimes(A,\s)$ is orthogonal by \cite[(2.23)]{Knus:1998} and we 
  let $h=\frac{1}{2}\Trd_{A\otimes_F B}$. 
  In either case, we denote the $F$-algebra with quadratic pair $(B\otimes_FA,\tau\otimes\s,h)$ by $(B,\tau)\boxtimes (A,\s)$.
Note that by \cite[(5.4)]{dolphin:PFC},  $(B,\tau)\boxtimes (A,\s)\simeq  (B,\tau)\otimes (A,\s,f)$ for any choice of semi-trace $f$ on $(A,\s)$.
Moreover, for any algebra with involution of the first kind $(C,\gamma)$ we have 
$$\bigl((C,\gamma)\otimes (B,\tau)\bigr)\boxtimes (A,\sigma)\simeq (C,\gamma)\otimes(B,\tau)\otimes(A,\sigma,f)\simeq (C,\gamma)\otimes\bigl((B,\tau)\boxtimes(A,\sigma)\bigr)\,.$$ We therefore  use the notation $(C,\gamma)\otimes(B,\tau)\boxtimes (A,\sigma)$ for this tensor product.

%\begin{prop}\label{thm:pfisterfactquad} Let $(Q,\gamma)$ be an $F$-quaternion algebra with canonical involution and  $(A,\s)$ a totally decomposable  $F$-algebra with symplectic involution. If $A\otimes_F Q$ is split then there exists a Pfister form $\pi$ over $F$ such that $(A,\sigma)\boxtimes (Q,\gamma)\simeq \Ad(\pi)$.
%\end{prop}
%\begin{proof}  If $\kar(F)\neq2$ then  $(A,\sigma)\boxtimes (Q,\gamma)$ is equivalent to a totally decomposable orthogonal involution, and the result is a special case of the main result of  \cite{Becher:qfconj}.  If $\kar(F)=2$ then the result is a special case of \cite[(7.3)]{dolphin:PFC}.
%\end{proof}

\section{Hermitian and quadratic  forms}\label{section:her}

We  recall  certain  results we use from hermitian and quadratic  form theory. 
We refer to \cite[Chapter 1]{{Knus:1991}} and \cite[Chapters 1 and 2]{Elman:2008} for standard notation and terminology, and  as  general references on hermitian and quadratic forms respectively.

Throughout this section, let  $(D,\theta)$ be an $F$-division algebra with involution.
Fix $\lambda\in Z(D)$ such that $\lambda\theta(\lambda)=1$.  Note that if $(D,\theta)$ is of the first kind one must have that $\lambda=\pm1$. 
A \emph{$\lambda$-hermitian form over $(D,\theta)$} is a pair $(V,h)$ where $V$ is a finite-dimensional right $D$-vector space and $h$ is
 a bi-additive map  $h:V\times V\rightarrow D$ such that $h(xd_1,yd_2)=\theta(d_1)h(x,y)d_2$ 
 and $h(y,x)=\lambda\theta({h(x,y)})$  hold 
 for all  $x,y\in V$ and  $d_1,d_2\in D$.
%$h(y,x)=\lambda\theta({h(x,y)})$ holds for all  $x,y\in V$. 
If $\lambda=1$ then we call a $\lambda$-hermitian form simply an \emph{hermitian form}.
%Let $\varphi=(V,h)$ be a $\lambda$-hermitian form over $(D,\theta)$.
 If $h(x,y)=0$ for all $y\in V$ implies $x=0$, then we say that $(V,h)$ is 
\emph{nondegenerate}. 
%We call $\dim_D(V)$ the \emph{dimension of  $\varphi$} and denote it by $\dim(\varphi)$.
We say $(V,h)$ \emph{represents an element $a\in D$} if $h(x,x)=a$ for some $x\in V\setminus\{0\}$. We call $(V,h)$ \emph{isotropic} if it represents $0$, and \emph{anisotropic} otherwise.

We say that a $\lambda$-hermitian form $(V,h)$ over $(D,\theta)$ is \emph{even} if $h(x,x)\in \Symd_{\lambda}(D,\theta)$ for all $x\in V$. Note that if $\kar(F)\neq 2$ or $(D,\theta)$ is unitary then all $\lambda$-hermitian forms over $(D,\theta)$ are even as in these cases $\Symd_{\lambda}(D,\theta)= \mathrm{Sym}_\lambda(D,\theta)$ (see \cite[(2.A) and (2.17)]{Knus:1998}). 
Assume $(V,h)$ is even. %We say a subspace $W\subseteq V$ is \emph{totally isotropic} if $h|_{W\times W}=0$. 
We say that $(V,h)$ is \emph{hyperbolic} if there exists a totally isotropic subspace $W\subseteq V$ with $\dim_D(W)=\frac{1}{2}\dim(V,h)$. The following proposition follows easily from \cite[Chapter 1, (3.7.3)]{Knus:1991}.

\begin{prop}\label{prop:hypuniq} Let $\varphi$ and $\psi$ be hyperbolic even $\lambda$-hermitian forms over $(D,\theta)$. If $\dim(\varphi)=\dim(\psi)$ then $\varphi\simeq \psi$.
\end{prop}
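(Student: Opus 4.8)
The plan is to reduce the statement to the cited result \cite[Chapter 1, (3.7.3)]{Knus:1991}, which (in the standard formulation) gives a Witt-type decomposition and cancellation for even $\lambda$-hermitian forms: an even $\lambda$-hermitian form over $(D,\theta)$ decomposes as an orthogonal sum of an anisotropic form and a hyperbolic form, and the hyperbolic part is determined by a single hyperbolic plane $\mathbb{H}(D)$ repeated a number of times. The key point is that a hyperbolic form in the sense defined just above the Proposition---one admitting a totally isotropic subspace of half the dimension---is precisely an orthogonal sum of copies of the hyperbolic plane, so two such forms of equal dimension have the same number of hyperbolic planes and are therefore isometric.

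First I would recall that for an even $\lambda$-hermitian form $(V,h)$, the existence of a totally isotropic subspace $W$ with $\dim_D(W)=\tfrac12\dim(V,h)$ forces $W$ to be a \emph{Lagrangian}, i.e. $W = W^{\perp}$ with respect to $h$; this uses nondegeneracy together with the dimension count $\dim_D(W)+\dim_D(W^{\perp})=\dim(V,h)$. Next, one invokes the structure theory from \cite[Chapter 1, (3.7.3)]{Knus:1991}: a nondegenerate even $\lambda$-hermitian form containing a Lagrangian is isometric to the orthogonal sum of $\tfrac12\dim(V,h)$ hyperbolic planes $\mathbb{H}(D) = (D \oplus D,\ h_0)$ with $h_0$ the standard hyperbolic form. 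Applying this to both $\varphi$ and $\psi$, we get $\varphi \simeq m\cdot\mathbb{H}(D)$ and $\psi \simeq n\cdot\mathbb{H}(D)$ where $2m = \dim(\varphi) = \dim(\psi) = 2n$, so $m=n$ and hence $\varphi\simeq\psi$.

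The main obstacle, and the only place requiring care, is the even hypothesis: in characteristic $2$ with $(D,\theta)$ of the first kind and symplectic, $\Symd_\lambda(D,\theta)$ is a proper subspace of $\mathrm{Sym}_\lambda(D,\theta)$, so not every hyperbolic idempotent-style splitting is available, and one must ensure the cited result is being applied in the even category where the hyperbolic plane is the unique ``building block.'' I would check that \cite[Chapter 1, (3.7.3)]{Knus:1991} is stated (or immediately specializes) to even forms, so that the diagonal entries $h(x,x)$ land in $\Symd_\lambda(D,\theta)$ throughout the decomposition; once that is in place, the argument above goes through verbatim in all characteristics. The remaining steps---the Lagrangian dimension count and the passage from ``admits a half-dimensional totally isotropic subspace'' to ``is an orthogonal sum of hyperbolic planes''---are routine and are exactly what is packaged in the reference, which is why the Proposition is asserted to follow easily.
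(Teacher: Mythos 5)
Your proposal matches the paper's approach: the paper offers no proof beyond citing \cite[Chapter 1, (3.7.3)]{Knus:1991}, and your argument is precisely the standard unpacking of that citation (hyperbolic even forms decompose as orthogonal sums of the hyperbolic plane $\HH(D)$, so equal dimension forces isometry). Your cautionary remark about checking that the reference applies within the even category in characteristic $2$ is well placed but does not change the route.
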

%\begin{proof}
%This follows easily from  \cite[Chapter 1, (3.7.3)]{Knus:1991}.
%\end{proof}

%
%Let $V$ be a finite dimensional right $D$-vector space and let $V^*=\mathrm{End}_D(V,D)$, the dual of $V$. Note that $V^*$ is a right $D$-module via $(\varphi d)(v)=\theta(d)\varphi(v)$, where $\varphi\in V^*$ and $d\in D$. We define a $F$-bilinear map
%  $h_\lambda:(V^*\oplus V)\times (V^*\oplus V)\rightarrow D$
%   by $$h_\lambda(\varphi + x,\psi +y) = \varphi(y) +\lambda\theta(\psi(x))
%\quad\textrm{ for } \varphi,\psi\in V^* \textrm{ and } x,y\in V.$$ Then $\HH_\lambda(V)=(V^*\oplus V,h_\lambda)$ is a nondegenerate even $\lambda$-hermitian form  over $(D,\theta)$. We call a $\lambda$-hermitian form over $(D,\theta)$ \emph{hyperbolic} if it is isometric to $\HH_{\lambda}(V)$ for some right $D$-vector space $V$.

%\begin{prop}\label{prop:wittcancel}
%Let $\varphi_1$, $\varphi_2$ and $\psi$ be  $\lambda$-hermitian forms over $(D,\theta)$ such that $\psi$ is even. Then $\varphi_1\perp \psi\simeq\varphi_2\perp\psi$ implies that $\varphi_1\simeq \varphi_2$.
%\end{prop}
%\begin{proof}
%See  \cite[Chapter 1, (6.4.5)]{{Knus:1991}}.
%\end{proof}

\begin{prop}\label{cor:hypiso}
Let $\varphi$ and $\psi$ be even $\lambda$-hermitian forms over $(D,\theta)$ of the same dimension. Then $\varphi\simeq \psi$ if and only if $\varphi\perp(-\psi)$ is hyperbolic.
\end{prop}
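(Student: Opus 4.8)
The plan is to run the standard Witt-theoretic argument: the ``only if'' direction is a direct construction, while the ``if'' direction combines \Cref{prop:hypuniq} with Witt cancellation. Throughout one must keep track of evenness, since hyperbolicity is only defined for even forms and since Witt cancellation in characteristic $2$ requires evenness.

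For the ``only if'' direction, I would first record that for any even $\lambda$-hermitian form $\chi$ over $(D,\theta)$ the orthogonal sum $\chi\perp(-\chi)$ is a hyperbolic even $\lambda$-hermitian form: it is even (and nondegenerate) because $\chi$ is, and the diagonal $\{(v,v)\mid v\in V_\chi\}$ is a totally isotropic subspace whose $D$-dimension is half that of $\chi\perp(-\chi)$. Now fix an isometry $g\colon(V_\varphi,\varphi)\lra(V_\psi,\psi)$; then $(v,w)\mapsto(g(v),w)$ is an isometry $\varphi\perp(-\psi)\lra\psi\perp(-\psi)$, as one verifies directly from the forms. Since an isometry sends a half-dimensional totally isotropic subspace to another one and $\psi\perp(-\psi)$ is hyperbolic, it follows that $\varphi\perp(-\psi)$ is hyperbolic.

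For the ``if'' direction, suppose $\varphi\perp(-\psi)$ is hyperbolic. Then $\varphi\perp(-\psi)$ and $\psi\perp(-\psi)$ are both hyperbolic even $\lambda$-hermitian forms of dimension $2\dim(\varphi)=2\dim(\psi)$, so \Cref{prop:hypuniq} provides an isometry $\varphi\perp(-\psi)\simeq\psi\perp(-\psi)$; cancelling the common orthogonal summand $-\psi$ then yields $\varphi\simeq\psi$.

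The step requiring care is this last cancellation. For arbitrary $\lambda$-hermitian forms Witt cancellation can fail when $\kar(F)=2$, so I would invoke it in the form valid for even $\lambda$-hermitian forms --- equivalently, for the associated $\lambda$-quadratic forms --- which does hold over every field (see \cite[Chapter 1]{Knus:1991}). This is precisely why the statement is restricted to even forms; along the way one must check that $-\psi$, $\psi\perp(-\psi)$ and $\varphi\perp(-\psi)$ all remain even, so that the cancellation theorem applies. Every other step is a routine verification of the axioms of a $\lambda$-hermitian form.
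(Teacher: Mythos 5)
Your proposal is correct and follows essentially the same route as the paper: the forward direction is routine, and the converse uses \Cref{prop:hypuniq} to get $\varphi\perp(-\psi)\simeq\psi\perp(-\psi)$ and then Witt cancellation for even $\lambda$-hermitian forms (the paper cites \cite[Chapter 1, (6.4.5)]{Knus:1991} for this). Your extra care about evenness and your explicit isometry for the ``only if'' direction are sound but merely expand what the paper treats as clear.
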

\begin{proof}
That $\varphi\perp(-\psi)$ is hyperbolic if $\varphi\simeq \psi$  is clear. For the converse, we have that  $\varphi\perp(-\psi)$ and $\psi\perp (-\psi)$  are hyperbolic and of the same dimension. Therefore by \cref{prop:hypuniq}  we have $\varphi\perp(-\psi)\simeq \psi\perp (-\psi)$.
As $-\psi$ is even we have  $\varphi\simeq \psi$ by \cite[Chapter 1, (6.4.5)]{{Knus:1991}}.
\end{proof}

 For every nondegenerate $\lambda$-hermitian form $\varphi=(V,h)$ over $(D,\theta)$, there is a unique $F$-involution on $\End_D(V)$, denoted $\ad_h$,  with  $\ad_h(a)=\theta(a)$ for all $a\in E$ such that
$h(f(x),y)=h(x,\ad_h(f)(y)) \textrm{ for all } x,y\in V \textrm{ and } f\in \End_D(V)$ (see \cite[(4.1)]{Knus:1998}).
%We call the $F$-involution $\s$ on $\End_D(V)$ defined by above conditions  the \emph{adjoint involution of $\varphi$} and we denote it by $\ad_{h}$. 
We denote the $F$-algebra with involution $(\End_D(V),\ad_{h})$ by $\Ad(\varphi)$ and we refer to it as the \emph{$F$-algebra with involution adjoint to $\varphi$}.
%If $(D,\theta)$ is unitary then $(\End_D(V),\ad_{h})$ is unitary. If $(D,\theta)$ is of the first kind
%and $\kar(F)\neq2$ then  $(D,\theta)$ and $(\End_D(V),\ad_{h})$ are of the same type if  and only if  $\lambda=1$. If $\kar(F)=2$ then $(\End_D(V),\ad_{h})$ is symplectic if and only if $\varphi$ is even 
%(see \cite[(4.2)]{Knus:1998}).
%Note that for all $c\in F^\times$ we have that $\Ad(\varphi)\simeq \Ad(c\varphi)$.  
%By \cite[(4.2)]{Knus:1998} we have for  $\lambda$-hermitian forms
%$\varphi$ and $\psi$  over $(D,\theta)$ that  $\Ad(\varphi)\simeq \Ad(\psi)$ if and only if there exists an element  $c\in F^\times$ such that $\varphi\simeq c\psi$.
We denote the  $1$-dimensional hermitian form $(D,h)$ over $(D,\theta)$ given by $h(x,y)=\theta(x)y$ for $x,y\in D$ by $\qf{1}_{(D,\theta)}$.  It is easy to see that  $\Ad(\qf{1}_{(D,\theta)})\simeq (D,\theta)$.

%  Let $(A,\sigma)$ be an $F$-algebra involution of any type if $\kar(F)\neq 2$, and of symplectic or unitary type if $\kar(F)=2$. Then we call $(A,\sigma)$ \emph{hyperbolic} if there exists an ideal $I\subset A$ such that $\sigma(I)I=0$ and $\dim_F I=\frac{1}{2}\dim_F A$. 

%\begin{cor}\label{cor:allhypsame}
%Let $(A,\s_1)$ and $(A,\s_2)$ be hyperbolic $F$-algebras with involution such that $\s_1|_{Z(A)}=\s_2|_{Z(A)}$. Then $(A,\s_1)\simeq(A,\s_2)$.
%\end{cor}
%\begin{proof}  See  \cite[(12.35)]{Knus:1998}. 
%Note that if $\kar(F)=2$, the hypothesis implies that if $(A,\s_i)$ is of the first kind then it is symplectic  by \cite[(6.7)]{Knus:1998}. By  \cite[(4.2)]{Knus:1998}, for $i\in\{1,2\}$ there exist even $\lambda$-hermitian forms $\varphi_i$ of the same dimension over some $F$-division algebra with involution  such that $(A,\s_i)\simeq\Ad(\varphi_i)
%$. As $\varphi_1$ and $\varphi_2$  are hyperbolic by \cref{prop:hypiff}$(2)$, we have   $\varphi_1\simeq\varphi_2$ by \cref{prop:hypuniq}. Hence $(A,\s_1)\simeq(A,\s_2)$.
%\end{proof}

By a \emph{bilinear form over $F$} we mean a $\lambda$-hermitian form over $(F,\id)$ (in this case we must have $\lambda=\pm1$).
Let $\varphi=(V,b)$ be a bilinear form over $F$. 
We call $\vf$  \emph{symmetric} if $b(x,y)=b(y,x)$ for all $x,y\in V$, i.e.~$\varphi$ is a hermitian form over $(F,\id)$. 
 We call $\varphi$ \emph{alternating} if  $b(x,x)=0$ for all $x\in V$. This is equivalent to $\varphi$ being an even $(-1)$-hermitian form over $(F,\id)$, as $\Symd_{-1}(F,\id)=\{0\}$.  %If $\varphi$ is alternating  then we have that $b(x,y)=-b(y,x)$ for all $x,y\in V$, that is, $\varphi$ is \emph{skew-symmetric}. In particular,  alternating  bilinear forms over a field of characteristic $2$ are symmetric.
%A bilinear form $\varphi$ is said to be \emph{isotropic} if it represents $0$, and \emph{anisotropic} otherwise. 
%We call a subspace $W\subset V$ \emph{totally isotropic} (with respect to $b$) if $b|_{W\times W}=0$. We call $\varphi$ \emph{metabolic} if there exists a subspace $W\subset V$ such that $\dim_F(W)=\frac{1}{2}\dim_F(V)$ and $W$ is totally isotropic with respect to $b$. Hyperbolic symmetric or alternating bilinear forms are metabolic, and the converse holds if $\kar(F)\neq 2$ (see \cite[(1.22) and (1.25)]{Elman:2008})
%Let $(A,\sigma)$ be an $F$-algebra with involution of the first kind. Then it is well known (see  \cite[(1.2)]{Knus:1998}) that  $A\simeq\End_D(V)$ for some $F$-division algebra $D$ and some $D$-vector space $V$. Moreover,
Any split $F$-algebra with involution of the first kind is isomorphic to $\Ad(\varphi)$ for some nondegenerate symmetric or alternating bilinear form $\varphi$ over $F$ (see \cite[(2.1)]{Knus:1998}).
An $F$-algebra with involution of the first kind is said to be \emph{symplectic} if it is adjoint to an alternating bilinear form over some splitting field, and \emph{orthogonal} otherwise.  

%In characteristic different from two, these types are distinguished by the dimensions of the spaces of symmetric and of alternating elements, whereas in characteristic two these dimensions do not depend on the type (see \cite[(2.6)]{Knus:1998}).

%The \emph{orthogonal sum of $\varphi_1$ and $\varphi_2$}, denoted $\varphi_1\perp \varphi_2$, is defined to be the pair $(V\times W,b'')$ with the $F$-bilinear  map $b'':(V\times W)\times (V\times W)\rightarrow F$ given by  $b'((v_1, w_1),(v_2, w_2))= b_1(v_1,v_2) + b_2(w_1,w_2)$ for all $v_1, v_2\in V$ and $w_1,w_2\in W$.

Let $\varphi=(V,b)$ be a symmetric bilinear form over $F$ and  $\psi=(W,h)$ be a $\lambda$-hermitian form over   $(D,\theta)$. Then $V\otimes_FW$ is a finite dimensional right $D$-vector space. Further, 
 there is a unique $F$-bilinear map $b\otimes h:(V\otimes_F W)\times (V\otimes_F W)\rightarrow F$ such that 
$(b\otimes h)\left( (v_1\otimes w_1), (v_2\otimes w_2)\right) =b(v_1,v_2)\cdot h(w_1,w_2) $
for all $w_1,w_2\in W, v_1,v_2\in V$. We call the $\lambda$-hermitian form $(V\otimes_F W, b\otimes h)$ over $(D,\theta)$ the  \emph{tensor product of $\varphi$ and $\psi$}, and denote it by $\varphi\otimes \psi$ (cf.~ \cite[Chapter 1, \S8]{Knus:1991}).

For $b_1,\ldots,b_n\in F^\times$ the symmetric $F$-bilinear map $b:F^n\times F^n\rightarrow F$  given by $(x,y)\mapsto \sum_{i=1}^n b_ix_iy_i$ yields a
symmetric bilinear form $(F^n,b)$,  which we denote by $\qf{b_1,\ldots,b_n}$.
For a nonnegative integer $m$, by an \emph{$m$-fold bilinear Pfister form} we mean a nondegenerate symmetric bilinear form that is isometric to $\qf{1,a_1}\otimes\ldots\otimes\qf{1,a_m}$  for some  $a_1,\ldots,a_m\in F^\times$, and we denote this form by $\pff{a_1,\ldots, a_m}$. We call $\qf{1}$ the 
 the \emph{$0$-fold bilinear Pfister form}.

%Let  $\varphi_1=(V,b_1)$ and $\varphi_2=(W,b_2)$ be symmetric or alternating bilinear forms over $F$.
%The \emph{tensor product of  $\varphi_1$ and $\varphi_2$}, denoted $\vf_1\otimes \vf_2$, is defined to be the pair $(V\otimes W,b')$ with the $F$-bilinear  map $b':(V\otimes W)\times (V\otimes W)\rightarrow F$ determined by the rule that  $b'(v_1\otimes w_1,v_2\otimes w_2)= b_1(v_1,v_2) \cdot b_2(w_1,w_2)$ for all $v_1, v_2\in V$ and $w_1,w_2\in W$.
%By \cite[(1.17)]{Elman:2008}, for any nondegenerate non-alternating symmetric  bilinear form $\varphi$ over $F$, there exists $a_1,\ldots,a_n\in F^\times$ such that $\varphi\simeq \qf{a_1,\ldots,a_n}$.

%Let $K/F$ be a field extension. Then we write $(V,b)_K=(V\otimes_F K, b_K) $ where $b_K$ is  given by $b_K( v\otimes k , w\otimes k')= kk'b(v,w)$ for all $v,w\in V$ and $k,k'\in K$.  %This induces a natural inclusion $W_q(F)\rightarrow W_q(K)$.

%\section{Quadratic  forms}\label{qf}

%Finally in this section we recall some basic definitions and results  from quadratic form theory. 
%We refer to \cite[Chapters 1 and 2]{Elman:2008} as a general reference on  quadratic forms and for any standard terminology. Recall that over fields of characteristic different from $2$ the notion of a symmetric bilinear form and the notion of a quadratic form are equivalent. This is not the case over fields of characteristic $2$. 

 %By  \cite[(6.3)]{Elman:2008},  a bilinear Pfister form $\pi$ is either anisotropic or metabolic.

By a \textit{quadratic form over $F$} we  mean a pair $(V,q)$ of a finite-dimensional $F$-vector space $V$ and a map  $q:V\rightarrow F$ such that, firstly,  $q(\lambda x)=\lambda^2q(x)$ holds for all $x\in V$ and $\lambda\in F$, and secondly,  the map   $b_q:V\times V\rightarrow F\,,\,(x,y)\longmapsto q(x+y)-q(x)-q(y)$ is  $F$-bilinear. 
%Then $(V,b_q)$ is a symmetric bilinear form over $F$, 
%called the \emph{polar form of $(V,q)$}.
 We say that $(V,q)$ is \emph{nonsingular} if $b_q$ is nondegenerate. Quadratic forms correspond on a one-to-one basis to symmetric bilinear forms if $\kar(F)\neq2$, but not otherwise.
We say two quadratic forms $\rho_1$ and $\rho_2$ over $F$ are \emph{similar} if there exists an element  $c\in F^\times$ such that $c\rho_1\simeq\rho_2$.

Recall that there is a natural notion of the tensor product of a symmetric bilinear form $\varphi$ and a quadratic form $\rho$ over $F$, denoted $\varphi\otimes\rho$  (see \cite[p.51]{Elman:2008}).  
For a nonnegative integer $m$, by an \emph{$m$-fold quadratic Pfister form over $F$} (or simply an  \emph{$m$-fold Pfister form}) we mean a quadratic form that is isometric to the tensor product of a $2$-dimensional nonsingular quadratic form representing $1$ and an $(m-1)$-fold bilinear Pfister form over $F$.
 For the following result see  \cite[(9.10) and (23.4)]{Elman:2008}.

\begin{prop}\label{Pfister} Let $m$ be a nonnegative integer. 
A nonsingular  quadratic form $\rho$ with $\dim(\rho)=2^m$ is similar to a Pfister form if and only if for every field extension  $K/F$, $\pi_K$ is either anisotropic or hyperbolic.
\end{prop}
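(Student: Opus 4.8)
The proposition is a standard characterisation of quadratic Pfister forms. The plan is to split it into its two implications and reduce each to a known structural theorem, so that the whole statement follows from \cite[(9.10) and (23.4)]{Elman:2008}.

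\emph{``Similar to a Pfister form'' $\Rightarrow$ ``anisotropic or hyperbolic over every field extension''.} Suppose $\rho\simeq c\pi$ for some $c\in F^\times$ and some $m$-fold quadratic Pfister form $\pi$. For any field extension $K/F$ the form $\pi_K$ is again an $m$-fold quadratic Pfister form over $K$, since the tensor expression defining $\pi$ base changes, and scaling by $c$ preserves both anisotropy and hyperbolicity. So it suffices to show that a quadratic Pfister form over an arbitrary field is anisotropic or hyperbolic --- equivalently, that an isotropic one is hyperbolic. One proves this, together with the roundness property $D^*(\pi)=G(\pi)$ for anisotropic $\pi$ (where $D^*$ denotes the set of nonzero represented values and $G$ the group of similarity factors), by a simultaneous induction on $m$, using a decomposition $\pi\simeq\pi'\perp b\pi'$ with $b\in F^\times$ and $\pi'$ an $(m-1)$-fold quadratic Pfister form: if $\pi'$ is isotropic it is hyperbolic by induction and hence so is $\pi$; and if $\pi'$ is anisotropic while $\pi$ is isotropic then $-b\in D^*(\pi')=G(\pi')$, so $b\pi'\simeq-\pi'$ and $\pi\simeq\pi'\perp(-\pi')$ is hyperbolic. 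This is \cite[(9.10)]{Elman:2008}, and it holds in every characteristic.

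\emph{The converse.} Suppose $\rho$ is nonsingular of dimension $2^m$ and $\rho_K$ is anisotropic or hyperbolic for every field extension $K/F$. Taking $K=F$ shows that $\rho$ is itself anisotropic or hyperbolic. If $\rho$ is hyperbolic then it is the hyperbolic quadratic form of dimension $2^m$, which is easily checked to be an isotropic --- hence, by the previous paragraph, hyperbolic --- $m$-fold quadratic Pfister form, so there is nothing to prove. If $m=0$ then $\rho$ is similar to $\qf{1}$, the $0$-fold Pfister form, and in characteristic $2$ this case does not occur, there being no nonsingular quadratic form of dimension $1$. So we may assume $m\geq1$ and $\rho$ anisotropic, and --- after replacing $\rho$ by a suitable scalar multiple --- that $\rho$ represents $1$. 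Let $L=F(\rho)$ denote the function field of the projective quadric defined by $\rho$. Over $L$ the form $\rho_L$ is isotropic, hence hyperbolic by hypothesis. It then remains to invoke the theorem that a nonsingular quadratic form of dimension $2^m$ which becomes hyperbolic over its own function field is similar to an $m$-fold quadratic Pfister form: this is \cite[(23.4)]{Elman:2008}, again valid in every characteristic.

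The genuinely deep point is this last invocation. The function-field criterion rests on the Cassels--Pfister subform theorem, on the behaviour of the Witt index of a form under passage to the function field of a quadric, and on an inductive recovery of the Pfister structure, and I would not attempt to reprove it here. Everything else is bookkeeping --- one works throughout with nonsingular quadratic forms and the quadratic (not bilinear) notion of Pfister form, and uses that nonsingular quadratic forms in characteristic $2$ have even dimension --- so that, beyond these points, no step of the argument is special in characteristic $2$.
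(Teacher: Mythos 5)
Your proposal is correct and takes essentially the same route as the paper: the paper's entire ``proof'' is the citation to \cite[(9.10) and (23.4)]{Elman:2008}, which are exactly the two results you invoke (isotropic quadratic Pfister forms are hyperbolic, and the function-field hyperbolicity criterion for being similar to a Pfister form). Your write-up simply supplies the surrounding bookkeeping --- the reduction of the converse to the function field $F(\rho)$, and the low-dimensional and hyperbolic edge cases --- none of which the paper spells out.
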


Quadratic pairs correspond to quadratic forms a way similar to the correspondence between  involutions of the first kind and bilinear forms.
To any nonsingular quadratic form  $\rho=(V,q)$ over $F$ one may associated a uniquely determined  quadratic pair on $\End_F(V)$, giving the  \emph{$F$-algebra with quadratic pair adjoint to $\rho$}, denoted $\Ad({\rho})$. See \cite[(5.11)]{Knus:1998}  for a description. By \cite[(5.11)]{Knus:1998}, any quadratic pair on a split algebra $\End_F(V)$ is the adjoint of some nonsingular quadratic form over $F$.
The notions of isotropy and hyperbolicity of quadratic forms extend to quadratic pairs; see~\cite[(6.5) and (6.12)]{Knus:1998} for the definitions. 

The following proposition summarises  the various functorial results for  hermitian forms  and involutions (resp.~quadratic forms and quadratic pairs)  we use.
 \begin{prop}\label{prop:hypiff}
Let $\psi$ and $\psi'$ be either  nondegenerate even $\lambda$-hermitian forms over $(D,\theta)$ or  quadratic forms over $F$ and let $\varphi$ be a nondegenerate symmetric bilinear form over $F$. Then 
\begin{enumerate}[$(1)$]
\item $\Ad(\psi)\simeq \Ad(\psi')$ if and only if $\psi\simeq c\psi'$ for some $c\in F^\times$.
\item
$\Ad(\psi)$ is isotropic (resp. hyperbolic) if and only if $\psi$ is isotropic (resp. hyperbolic).
\item $\Ad(\varphi\otimes \psi)\simeq \Ad(\varphi)\otimes \Ad(\psi)$.
\end{enumerate} 
\end{prop}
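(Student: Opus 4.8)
The plan is to prove \cref{prop:hypiff} by reducing everything to well-known transfer results, treating the three parts in turn and handling the hermitian and quadratic cases uniformly where possible, citing \cite{Knus:1998} and \cite{Elman:2008} for the split quadratic case and \cite{Knus:1991} for the hermitian case. For part (1), the key point is that the adjoint involution (resp.\ quadratic pair) attached to $\psi$ only depends on $\psi$ up to scaling: if $\psi' \simeq c\psi$ with $c \in F^\times$, then any isometry realising this intertwines $\ad_\psi$ with $\ad_{c\psi}$, and $\ad_{c\psi} = \ad_\psi$ since rescaling the form does not change the adjoint condition $h(f(x),y) = h(x,\ad(f)(y))$; combined with the compatibility of the semi-traces in the quadratic-pair case, this gives $\Ad(\psi) \simeq \Ad(\psi')$. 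Conversely, given an isomorphism $\Phi\colon \Ad(\psi) \to \Ad(\psi')$ of algebras with involution (resp.\ quadratic pair), one uses the fact that $\Phi$ is inner, say $\Phi = \Int(u)$ for some $u \in \End_D(V)^\times$, and then checks that $u$ transports $h$ to a scalar multiple of $h'$; this is precisely \cite[Chapter 1, (6.4.5)]{Knus:1991} in the hermitian case and the analogous statement in \cite[(5.11)]{Knus:1998} for quadratic forms. The hypothesis that the forms are even (resp.\ nonsingular) is what guarantees that the adjoint involution determines the type and that no pathological rescaling issues arise in characteristic $2$.

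For part (2), isotropy of $\Ad(\psi)$ translates directly: an element $a \in \End_D(V)\setminus\{0\}$ with $\ad_\psi(a)a = 0$ has nonzero image $W = a(V)$, and the adjoint relation forces $\psi|_W$ to be totally isotropic, so $\psi$ is isotropic; conversely an isotropic vector (or subspace) lets one build such an $a$ as a suitable projection. For hyperbolicity, a hyperbolic idempotent $e$ with $\ad_\psi(e) = 1-e$ decomposes $V = e(V) \oplus (1-e)(V)$ into two totally isotropic subspaces of equal dimension, exhibiting $\psi$ as hyperbolic; the converse reverses this. These facts are recorded in \cite[(6.5), (6.12)]{Knus:1998} for the split quadratic-pair case and follow from \cite[Chapter 1, (3.7.3)]{Knus:1991} together with \cref{prop:hypuniq} in the hermitian case.

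For part (3), I would invoke the associativity and functoriality of the adjoint construction under tensor products. Concretely, if $\varphi = (V_0, b)$ is a nondegenerate symmetric bilinear form and $\psi = (W, h)$, then $\End_D(V_0 \otimes_F W) \simeq \End_F(V_0) \otimes_F \End_D(W)$ canonically, and under this identification the adjoint involution $\ad_{b \otimes h}$ corresponds to $\ad_b \otimes \ad_h$; in the quadratic-pair case one additionally checks that the semi-traces match, which is exactly the defining property $g(b' \otimes a) = \Trd(b')\cdot f(a)$ of the tensor-product semi-trace recalled in \S\ref{section:basicsalgs}. This is essentially \cite[(4.2) and (5.18)]{Knus:1998}.

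The main obstacle is part (1) in characteristic $2$: the scaling ambiguity for quadratic pairs is more delicate than for involutions, since the semi-trace $f$ carries extra information not visible to $\sigma$ alone. I expect the cleanest route is to first settle the purely hermitian/bilinear statement via \cite[Chapter 1, (6.4.5)]{Knus:1991}, then upgrade to quadratic pairs by tracking the semi-trace through the isomorphism using the explicit formula in \cite[(5.11)]{Knus:1998}; the evenness and nonsingularity hypotheses are precisely what make this bookkeeping go through. Parts (2) and (3) are then routine given the cited references.
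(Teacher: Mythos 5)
Your proposal is correct and takes essentially the same approach as the paper: parts $(1)$ and $(2)$ are handled by appealing to the standard transfer results in \cite{Knus:1998} (and, for the isotropy of hermitian forms, adapting the split-case argument), and part $(3)$ is verified by the direct computation showing $\ad_{b\otimes h}(f\otimes g)=\ad_b(f)\otimes\ad_h(g)$ for the hermitian case, with \cite[(5.19)]{Knus:1998} (rather than (5.18)) cited for the quadratic-pair case. The paper gives only the references plus the explicit calculation in $(3)$, whereas you sketch the underlying mechanisms, but the substance is the same.
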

\begin{proof} 
$(1)$ See \cite[(4.2) and (5.11)]{Knus:1998}.

$(2)$ See  \cite[(6.3), (6.6) and (6.13)]{Knus:1998} for the result for quadratic forms. For  $\psi$  a $\lambda$-hermitian form, 
see \cite[(6.7)]{Knus:1998} for the statement on hyperbolicity. See \cite[(3.2)]{dolphin:quadpairs} for the isotropy result for  $\psi$ a nondegenerate symmetric bilinear form over $F$. The proof is easily generalised to the case of a $\lambda$-hermitian form over $(D,\theta)$. 

$(3)$ For $\psi$ a quadratic form, see  \cite[(5.19)]{Knus:1998}. Otherwise,
let $\varphi=(V,b)$ and $\psi=(W,h)$. For all $f\in \End_F(V)$, $g\in \End_D(W)$,  $v,v'\in V$ and $w,w'\in W$ we have
\begin{eqnarray*}(b\otimes h) (f\otimes g(v\otimes w), (v'\otimes w'))& %=&(b\otimes h)((f(v)\otimes g(w)), (v'\otimes w'))
 = &b(f(v), v')\cdot h(g(w), w') 
\\ &=&  b(v, \ad_b(f)(v'))\cdot h(w, \ad_h(g)(w'))
\\ &=& (b\otimes h) ((v\otimes w), (\ad_b(f)(v')\otimes \ad_h(g)(w')))\,.
\end{eqnarray*}
Therefore by  the bilinearity of $b\otimes h$ we have that $\ad_{b\otimes h}(f\otimes g) = \ad_b(f)\otimes \ad_h(g)$. 
Using this, it follows from the linearity of $\ad_{b\otimes h}$  that the natural isomorphism of $F$-algebras $\Phi:\End_F(V)\otimes_F\End_D(W)\rightarrow \End_{D}(V\otimes_D W)$ is an   isomorphism of the $F$-algebras with involution in the statement.
\end{proof}

It follows from \cref{prop:hypiff}$(2)$ and \cite[(1.8)]{Elman:2008} that any split algebra with symplectic involution is hyperbolic and  from \cref{prop:hypiff}$(3)$ that  an  $F$-algebra with involution adjoint to a bilinear Pfister form over $F$  is totally decomposable.

%\begin{prop}\label{prop:hypiff}$(3)$ Let $\varphi$ be a nondegenerate bilinear form over $F$ and let  $\psi$ be either  a nondegenerate  $\lambda$-hermitian form over $(D,\theta)$ or a quadratic form over $F$. Then 
%$\Ad(\varphi\otimes \psi)\simeq \Ad(\varphi)\otimes \Ad(\psi). $
%\end{prop} 
%\begin{proof} 
%For $\psi$ a quadratic form, see  \cite[(5.19)]{Knus:1998}. Otherwise,
%let $\varphi=(V,b)$ and $\psi=(W,h)$. For all $f\in \End_F(V)$, $g\in \End_D(W)$,  $v,v'\in V$ and $w,w'\in W$ we have
%\begin{eqnarray*}(b\otimes h) (f\otimes g(v\otimes w), (v'\otimes w'))& %=&(b\otimes h)((f(v)\otimes g(w)), (v'\otimes w'))
 %= &b(f(v), v')\cdot h(g(w), w') 
%\\ &=&  b(v, \ad_b(f)(v'))\cdot h(w, \ad_h(g)(w'))
%\\ &=& (b\otimes h) ((v\otimes w), (\ad_b(f)(v')\otimes \ad_h(g)(w')))\,.
%\end{eqnarray*}
%Therefore by  the bilinearity of $b\otimes h$ we have that $\ad_{b\otimes h}(f\otimes g) = \ad_b(f)\otimes \ad_h(g)$. 
%Using this, it follows from the linearity of $\ad_{b\otimes h}$  that the natural isomorphism of $F$-algebras $\Phi:\End_F(V)\otimes_F\End_D(W)\rightarrow \End_{D}(V\otimes_D W)$ is an   isomorphism of the $F$-algebras with involution in the statement.
%\end{proof}

\section{Jacobson's trace forms}\label{section:jacobson}

We  now consider hermitian forms over a  quaternion division algebra or a separable quadratic extension together with their respective canonical involution. It was first shown in \cite{jacobson:hermformstr} that, over fields of characteristic different from $2$,  the theory of hermitian forms over these division algebras with involution can be reduced to the study of  quadratic forms  over the base  field that are multiples  of the respective norm forms. A version  of this result with no assumption on the characteristic of the base field was given  in  \cite{sah:evenherm}. In this case we do not get a correspondence between all hermitian forms and quadratic forms, but rather only between  even hermitian forms and quadratic forms. This is no restriction for fields of characteristic different from $2$ or for the separable quadratic extension case, as then all hermitian forms are even. 
For the cases of   a separable quadratic extension and quaternion algebras in characteristic different from two, the main result of this section is shown  in  \cite[(10.1.1) and (10.1.7)]{Scharlau:1985}. Here we give a uniform presentation for both cases and with no restriction on the characteristic of the base field.

First we recall some facts on quaternion algebras.
An $F$-\emph{quaternion algebra} is a central simple $F$-algebra of degree $2$. 
Any $F$-quaternion algebra has a basis $(1,u,v,w)$ such that
$$u^2 =u+a, v^2=b\textrm{ and }w=uv=v-vu\,$$
for some  $a\in F$ with $-4a\neq 1$, $b\in F^\times$
 (see  \cite[Chapter IX, Thm.~26]{Albert:1968}). %; any such basis is called a \emph{quaternion basis}.
  Let $Q$ be an $F$-quaternion algebra.
By  \cite[(2.21)]{Knus:1998}, the map $Q\rightarrow Q$ given by  $x\mapsto \Trd_Q(x)-x$ is the unique symplectic involution on $Q$; it is called the \emph{canonical involution of $Q$}. With an $F$-basis $(1,u,v,w)$ of $Q$ as above, the canonical involution $\gamma$ on $Q$ is determined by the conditions  $$\gamma(u)=1-u\quad\textrm{ and }\quad\gamma(v)=-v\,.$$  
  By considering $Q$ as an $F$-vector space, we can view $(Q,\mathrm{Nrd}_Q)$ as a $4$-dimensional quadratic  form on $F$. Further, $(Q,\mathrm{Nrd}_Q)$ is a $2$-fold Pfister form and $Q$ is split if and only if $(Q,\Nrd_Q)$ is hyperbolic (see \cite[(12.5)]{Elman:2008}).

  Now let   $K/F$ be a quadratic \'etale extension. That is, $K/F$ is either a separable quadratic 
extension or $K\simeq F\times F$.     
We call the non-trivial $F$-automorphism $\tau$ on $K$ the \emph{canonical involution of $K$}. %Let $\tau$ be the canonical involution on $K$.
 %We call the  map $K\rightarrow F$ given by $x\mapsto \tau(x)x$ the \emph{norm form of $K/F$} and denote it by $\mathrm{Nrd}_K$.
%Viewing $K$ as a $2$-dimensional vector-space over $F$, we may consider the $(K,\mathrm{Nrd}_K)$ as a $2$-dimensional quadratic form over $F$. 
For every quadratic \'etale extension $K/F$ there exists an element $u\in K$ satisfying 
$$\tau(u)=1-u\quad  \textrm{ and } \quad u^2-u=a$$  for some $a\in F$ with $4a\neq -1$ such that   $K$ is $F$-isomorphic to $F(u)$
(see  \cite[Chapter IX, Lemma 8]{Albert:1968} if $K$ is a field, otherwise take $u=(0,1)$). 
 Viewing $K$ as a $2$-dimensional vector-space over $F$, we may consider the $(K,\mathrm{Nrd}_K)$ as a $2$-dimensional quadratic form over $F$. 
 One can then directly check that $\Nrd_{F(u)}$ is given by $(x,y)\mapsto x^2+xy+ay^2$. In particular, $(K,\Nrd_K)$ is a $1$-fold Pfister form. Further $K\simeq F\times F$ if and only if $(K,\Nrd_K)$ is hyperbolic.

  %If $Q\simeq [a,b)_F$ for some $a,b\in F$ then $(Q,\mathrm{Nrd}_Q)\simeq \pfr{b,a}$.
%An element $x\in Q$ is called a \emph{pure quaternion} if $\gamma(x)+x=0$. Let $Q'$ denote the $3$-dimensional subspace of all pure quaternions, Then we call  $\mathrm{Nrd}_Q|_{Q'}$ the \emph{pure part of the reduced norm form of $Q$}, and $\rho=(Q',\mathrm{Nrd}_Q|_{Q'})$ may be viewed as a quadratic form on a $3$-dimensional vector space over $F$. Note that if $\kar(F)=2$, then this quadratic form is singular. 
%We denote the function field $F(\rho)$ by $F(Q)$.
%If $Q\simeq [a,b)_F$ for $a\in F$ and $b\in F^\times$ as above, then this conic is defined by the equation $b(x^2+xy +ay^2) +z^2 =0$.

%In this section we give some results on the objects we are interested in and quaternion algebras.

%We denote the function field over $F$ of  the reduced norm form of an $F$-quaternion algebra $Q$ by $F(Q)$.

Throughout the rest of the section, let $(D,\theta)$ be either an $F$-quaternion division algebra or a separable quadratic extension of $F$ together with the respective canonical involution. 
Let $(V,h)$ be a nondegenerate even hermitian form over $(D,\theta)$.  As  $(V,h)$ is even we have that for all $x\in V$, $h(x,x)\in \Symd(D,\theta)$. Since  $\Symd(D,\theta)=F$, we may define  a  map $V\rightarrow F$ by $x\mapsto h(x,x)$. We denote this map by  $q_h$. 
 The first statement in the following result can be found in \cite[Thm.~1]{sah:evenherm}, but we provide a full proof for completeness.

\begin{prop}\label{prop:assocquadform}
Let   $(V,h)$ be   a nondegenerate even hermitian form over $(D,\theta)$.
  Considering $V$ as a vector space over $F$, the pair $(V,q_h)$ is a nonsingular quadratic form over $F$.  Further there exists a nondegenerate symmetric bilinear form $\varphi$  such that $(V,h)\simeq \varphi\otimes \qf{1}_{(D,\theta)}$ and  $(V,q_h)\simeq \varphi\otimes (D,\Nrd_D).$
 %(V,a_1\Nrd_Q\perp\ldots \perp a_n\Nrd_Q)
\end{prop}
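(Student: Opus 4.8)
The plan is to proceed in three steps. First I would check that $(V,q_h)$ is a nonsingular quadratic form over $F$. Since $\theta$ restricts to the identity on $F$ and $F$ is central in $D$, and since $h(v,v)\in\Symd(D,\theta)=F$ by evenness, we get $q_h(vc)=\theta(c)h(v,v)c=c^2q_h(v)$ for $c\in F$, and the polarisation $b_{q_h}(v,w)=q_h(v+w)-q_h(v)-q_h(w)=h(v,w)+h(w,v)=h(v,w)+\theta(h(v,w))$ equals the reduced trace $\Trd_D(h(v,w))$ when $D$ is a quaternion algebra and the trace of $Z(D)/F$ applied to $h(v,w)$ when $D$ is a separable quadratic extension; in either case it is $F$-bilinear and symmetric, so $(V,q_h)$ is a quadratic form. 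For nonsingularity, suppose $b_{q_h}(v,w)=0$ for all $w$, with $v\neq0$. As $h$ is nondegenerate there is $y_0\in V$ with $h(v,y_0)\neq0$, and since $D$ is a division algebra $\{h(v,y_0d):d\in D\}=h(v,y_0)D=D$; hence the above trace would vanish identically on $D$, contradicting its surjectivity onto $F$ (in the separable quadratic case this surjectivity is exactly the separability hypothesis). So $b_{q_h}$ is nondegenerate. I expect this step to be routine, but it is precisely here that the hypotheses that $D$ is a division algebra and separable are used.

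Next I would extract $\varphi$ by induction on $\dim_D(V)$, showing $(V,h)\simeq\varphi\otimes\qf{1}_{(D,\theta)}$ for some nondegenerate symmetric bilinear form $\varphi$ over $F$. If some $x\in V$ has $q_h(x)=a\in F^\times$, then $xD$ is a nondegenerate subspace with $h|_{xD}\simeq\qf{a}_{(D,\theta)}=\qf{a}\otimes\qf{1}_{(D,\theta)}$, and $V=xD\perp(xD)^\perp$ with $(xD)^\perp$ again even, nondegenerate, and of smaller $D$-dimension; applying the inductive hypothesis to $(xD)^\perp$ we may take $\varphi=\qf{a}\perp\psi$. If instead $q_h$ vanishes identically on $V\neq0$, then $h(x,x)=0$ for all $x\in V$; using nondegeneracy one finds $x,z$ spanning a two-dimensional $D$-subspace with $h(x,x)=h(z,z)=0$ and $h(x,z)=1$, whose restricted form is a nondegenerate hyperbolic hermitian plane isometric to $b_0\otimes\qf{1}_{(D,\theta)}$, where $b_0$ is the $2$-dimensional symmetric bilinear form over $F$ with Gram matrix $\left(\begin{smallmatrix}0&1\\1&0\end{smallmatrix}\right)$; splitting this off and applying induction gives $\varphi=b_0\perp\psi$. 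In either case $\varphi$ is an orthogonal sum of nondegenerate forms, hence nondegenerate. The subtle point is the characteristic-two instance of the second alternative: there one genuinely needs the hyperbolic hermitian plane and cannot replace it by $\qf{1,-1}_{(D,\theta)}$, since over a field of characteristic two the form $b_0$ is alternating and not diagonalisable.

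Finally, I would record the general compatibility: for any symmetric bilinear form $b$ over $F$ and any even hermitian form $k$ over $(D,\theta)$ one has $q_{b\otimes k}\simeq b\otimes q_k$ as quadratic forms, which follows by evaluating both sides on a tensor product of bases, the cross terms on each side being $b(v_i,v_j)\,b_{q_k}(w_i,w_j)$. Since $q_{\qf{1}_{(D,\theta)}}$ is by construction the map $d\mapsto\theta(d)d=\Nrd_D(d)$, that is, the quadratic form $(D,\Nrd_D)$, and since the assignment $h\mapsto q_h$ plainly respects isometries, the isometry $(V,h)\simeq\varphi\otimes\qf{1}_{(D,\theta)}$ from the previous step yields $(V,q_h)\simeq q_{\varphi\otimes\qf{1}_{(D,\theta)}}\simeq\varphi\otimes(D,\Nrd_D)$, which completes the proof.
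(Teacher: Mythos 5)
Your proof is correct, and it takes a genuinely more self-contained route than the paper's. The paper simply invokes \cite[Chapter~I,~(6.2.4)]{Knus:1991} to produce, in one stroke, an orthogonal basis $(v_1,\ldots,v_n)$ of $(V,h)$ with all diagonal values $h(v_i,v_i)$ in $\Symd(D,\theta)\setminus\{0\}=F^\times$; the form $\varphi=\qf{a_1,\ldots,a_n}$ then drops out immediately, and nonsingularity of $q_h$ is read off from the resulting isometry $q_h\simeq\varphi\otimes(D,\Nrd_D)$, since both factors are nondegenerate. You instead run the Gram--Schmidt-type induction by hand, with a case split on whether $q_h$ represents a nonzero scalar, and you prove nonsingularity of $q_h$ directly from nondegeneracy of $h$ via surjectivity of the trace $d\mapsto d+\theta(d)$. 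Both are valid; yours avoids the external citation at the cost of a longer argument.

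One small remark: your second case (the one where $q_h$ vanishes identically on $V\neq 0$) is in fact vacuous for the $(D,\theta)$ of this proposition. If $h(x,x)=0$ for all $x$, then polarising gives $h(x,y)+\theta(h(x,y))=0$ for all $x,y$; choosing $x,y$ with $c=h(x,y)\neq 0$ and replacing $y$ by $yd$ shows $cd+\theta(cd)=0$ for all $d\in D$, hence $\Symd(D,\theta)=\{0\}$ since $cD=D$. But for a quaternion algebra with canonical involution or a separable quadratic extension with its nontrivial automorphism one always has $\Symd(D,\theta)=F\neq\{0\}$. So the orthogonal basis with diagonal entries in $F^\times$ always exists and the hyperbolic-plane branch never fires; this is exactly the content of the reference the paper cites. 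Keeping your case (b) is harmless and would matter in a more general setting, but your parenthetical worry about characteristic two and $\qf{1,-1}_{(D,\theta)}$ is moot here.
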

\begin{proof}
We clearly have that $q_h:V\rightarrow F$ is such that  $q_h(\lambda x)=\lambda^2q_h(x)$ for all $\lambda\in F$ and $x\in V$. Further, let $b:V\times V\rightarrow F$ be given by $$b(x,y)=q_h(x+y) - q_h(x)-q_h(y)=h(x,y)+ \theta(h(x,y)), \quad\textrm{for  } x,y\in V.$$ It is easily checked that $(V,b)$ is a symmetric bilinear form over $F$. Hence $(V,q_h)$ is a quadratic form over $F$. 

%Let $\eta=(D,h')$ be the $1$-dimensional hermitian form over $(Q,\gamma)$ given by $h'(x,y)=\gamma(x)y$ for $x,y\in D$.
 By  \cite[Chapter I, (6.2.4)]{Knus:1991} there exists an orthogonal basis $(v_1,\ldots, v_n)$ of $(V,h)$ with $h(v_i,v_i)\in \Symd(D,\theta)\setminus\{0\} =F^\times$ for all $i\in\{1,\ldots, n\}$.  Let $h(v_i,v_i)=a_i$ for $i=1,\ldots, n$.
 Consider the $F$-vector space $U=Fv_1\oplus\ldots\oplus Fv_n$. Then $\varphi=(U,h|_{U\times U})$ is a nondegenerate symmetric bilinear form over $F$, and the natural isomorphism of $D$-spaces $U\otimes_FD\rightarrow V$ gives an isometry  $(V,h)\simeq\qf{a_1,\ldots, a_n}\otimes \qf{1}_{(D,\theta)}$.
 Hence for $x=(x_1,\ldots,x_n)\in V$ we have 
 $$h(x,x)= a_1\theta(x)x+\ldots +a_n\theta(x)x\,.$$
  As $\theta(x)x=\Nrd_D(x)$ for all $x\in D$  we therefore have that $(V,q_h)\simeq \varphi\otimes (D,\Nrd_D)$ for $\varphi=\qf{a_1,\ldots, a_n}$, and in particular $(V,q_h)$ is nonsingular.
\end{proof}

\begin{prop}\label{lemma:trace} 
Let $(V,h)$ be a nondegenerate even hermitian form over $(D,\theta)$.
 Then
 $(V,h)$ is isotropic (resp. hyperbolic) if and only if $(V,q_h)$ is isotropic (resp. hyperbolic). 
\end{prop}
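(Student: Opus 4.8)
The plan is to reduce both the isotropy and the hyperbolicity statements to the diagonalisation obtained in \cref{prop:assocquadform}, namely $(V,h)\simeq\varphi\otimes\qf{1}_{(D,\theta)}$ and $(V,q_h)\simeq\varphi\otimes(D,\Nrd_D)$ for a nondegenerate symmetric bilinear form $\varphi$ over $F$, together with the fact that $(D,\Nrd_D)$ is an anisotropic Pfister form (it is anisotropic precisely because $D$ is a division algebra, whether $D$ is a quaternion division algebra or a separable quadratic field extension).

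For the isotropy equivalence, the forward direction is immediate: if $h(x,x)=0$ for some $x\in V\setminus\{0\}$, then $q_h(x)=h(x,x)=0$ and the same $x$ witnesses isotropy of $(V,q_h)$. For the converse, suppose $(V,q_h)$ is isotropic. Diagonalising as $(V,h)\simeq\qf{a_1,\dots,a_n}\otimes\qf{1}_{(D,\theta)}$ with $a_i\in F^\times$, we get $(V,q_h)\simeq\qf{a_1,\dots,a_n}\otimes(D,\Nrd_D)$. An isotropic vector $x=(x_1,\dots,x_n)\in V=D^n$ with $\sum_i a_i\Nrd_D(x_i)=0$ and not all $x_i=0$; since $\Nrd_D$ is anisotropic, some $x_i\neq0$ has $\Nrd_D(x_i)\neq0$, and then $\sum a_i\Nrd_D(x_i)=0$ exhibits a nontrivial zero of the quadratic form $\qf{a_1,\dots,a_n}$ over $F$ (after dividing out). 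Equivalently, one can argue more cleanly: $(V,h)$ isotropic $\iff$ $\varphi$ isotropic over $F$ (a round-form argument, or directly: $\qf{a_1,\dots,a_n}\otimes\qf1_{(D,\theta)}$ is isotropic iff $\qf{a_1,\dots,a_n}$ is, because $\Nrd_D$ is anisotropic), and likewise $(V,q_h)=\varphi\otimes(D,\Nrd_D)$ is isotropic iff $\varphi$ is isotropic, since the tensor product of an anisotropic quadratic Pfister form with an anisotropic bilinear form is anisotropic (a standard Pfister-form fact; see \cite[\S9]{Elman:2008}). Comparing, $(V,h)$ is isotropic iff $\varphi$ is isotropic iff $(V,q_h)$ is isotropic.

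For the hyperbolicity equivalence, I would again pass through $\varphi$. On the hermitian side, $\varphi\otimes\qf1_{(D,\theta)}$ is hyperbolic iff $\varphi$ is hyperbolic: if $\varphi=\mathbb{H}^{\oplus k}\perp\varphi_{\mathrm{an}}$ with $\varphi_{\mathrm{an}}$ anisotropic over $F$, then $\varphi\otimes\qf1_{(D,\theta)}$ has anisotropic part $\varphi_{\mathrm{an}}\otimes\qf1_{(D,\theta)}$, which by the isotropy analysis above is anisotropic precisely when $\varphi_{\mathrm{an}}$ is; and a nondegenerate even $\lambda$-hermitian form is hyperbolic iff its anisotropic part is $0$. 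On the quadratic side, $\varphi\otimes(D,\Nrd_D)$ is hyperbolic iff $\varphi$ is hyperbolic: since $(D,\Nrd_D)$ is an anisotropic Pfister form, $\varphi_{\mathrm{an}}\otimes(D,\Nrd_D)$ is anisotropic when $\varphi_{\mathrm{an}}\neq0$ (same Pfister fact), so the Witt index of $\varphi\otimes(D,\Nrd_D)$ forces $q_h$ hyperbolic iff $\varphi_{\mathrm{an}}=0$. Chaining these, $(V,h)$ hyperbolic $\iff$ $\varphi$ hyperbolic $\iff$ $(V,q_h)$ hyperbolic. Uniqueness of the anisotropic part (Witt decomposition) is available for even $\lambda$-hermitian forms via \cref{cor:hypiso} and \cref{prop:hypuniq}, and for quadratic forms via the standard Witt theory in \cite[Chapter 2]{Elman:2008}.

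The main obstacle is the characteristic-two bookkeeping: one must be careful that "hyperbolic" for the even hermitian form $(V,h)$ (a totally isotropic subspace of half the dimension) is genuinely equivalent to "anisotropic part is zero", and that Witt cancellation holds in the relevant form theories in characteristic $2$ — this is why the even hypothesis is essential and why \cref{prop:hypuniq}, \cref{cor:hypiso} are invoked. The other delicate point is the Pfister-form statement that $\varphi_{\mathrm{an}}\otimes(D,\Nrd_D)$ is anisotropic when $\varphi_{\mathrm{an}}$ is; in characteristic $2$ this is the statement that an anisotropic bilinear form tensored with an anisotropic quadratic Pfister form stays anisotropic, which holds and can be cited from \cite{Elman:2008}. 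Once these facts are in place the proof is a short chain of equivalences with $\varphi$ as the pivot.
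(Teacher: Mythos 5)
Your strategy of pivoting both equivalences through the auxiliary bilinear form $\varphi$ does not work, because the intermediate equivalences you assert are false. You claim that $\varphi\otimes\qf{1}_{(D,\theta)}$ is isotropic (resp.\ hyperbolic) if and only if $\varphi$ is, and that $\varphi_{\an}\otimes(D,\Nrd_D)$ stays anisotropic when $\varphi_{\an}\neq 0$. Both fail. Take $F=\mathbb{Q}$, $D=\mathbb{Q}(i)$ with $\theta$ complex conjugation, so $\pi=(D,\Nrd_D)\simeq\qf{1,1}$ is an anisotropic $1$-fold Pfister form, and take $\varphi=\qf{1,-5}$, which is anisotropic over $\mathbb{Q}$. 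Since $5=\Nrd_D(2+i)$ is a norm from $\mathbb{Q}(i)$, the hermitian form $\varphi\otimes\qf{1}_{(D,\theta)}$ is isotropic, hence hyperbolic (it is $2$-dimensional and even); correspondingly $\varphi\otimes\pi\simeq\qf{1,1,-5,-5}$ is isotropic, indeed hyperbolic, over $\mathbb{Q}$. So the ``standard Pfister-form fact'' you invoke --- that an anisotropic symmetric bilinear form tensored with an anisotropic quadratic Pfister form remains anisotropic --- is not true beyond the one-dimensional (scaling) case. The underlying obstruction is that the $\varphi$ produced by \cref{prop:assocquadform} is not an invariant of $(V,h)$: different orthogonal bases yield non-isometric $\varphi$'s (in the example above both $\qf{1,-5}$ and $\qf{1,-1}$ serve as $\varphi$), so there is no well-defined anisotropic part of $\varphi$ to compare with $q_h$.

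The workable argument in fact uses your opening observation --- that $q_h(x)=h(x,x)$ makes isotropy transfer tautologically --- and then leverages it inductively for hyperbolicity, without ever needing an anisotropy-preservation statement. For isotropy, both directions are immediate from $q_h(x)=h(x,x)$; the detour you sketch through diagonalisation and ``dividing out'' is unnecessary (and also unsound as written, since $\sum a_i\Nrd_D(x_i)=0$ with $\Nrd_D(x_i)\in F^\times$ does not exhibit a zero of the bilinear form $\qf{a_1,\dots,a_n}$, those norms not being squares in general). For hyperbolicity: if $(V,h)$ is hyperbolic, one does not deduce anything about the $\varphi$ coming from an arbitrary diagonalisation, but instead uses \cref{prop:hypuniq} to \emph{replace} $\varphi$ by $\qf{1,-1,\dots,1,-1}$ (hyperbolic even hermitian forms of equal dimension are isometric), after which \cref{prop:assocquadform} gives $(V,q_h)\simeq\qf{1,-1,\dots}\otimes\pi$, visibly hyperbolic. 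Conversely, if $(V,q_h)$ is hyperbolic then it is isotropic, so $(V,h)$ is isotropic; using nondegeneracy and evenness one splits off a hyperbolic hermitian plane from $(V,h)$, the corresponding splitting of $q_h$ has a hyperbolic quadratic summand, Witt cancellation removes it, and one concludes by induction on $\dim_D V$. This route never asserts that tensoring with $\pi$ preserves anisotropy, which is the claim your proposal cannot make good on.
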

\begin{proof}  Clearly  $(V,q_h)$ is isotropic if and only if $(V,h)$ is isotropic.
%Let $\eta=(D,h')$ be the $1$-dimensional hermitian form over $(Q,\gamma)$ given by $h'(x,y)=\gamma(x)y$ for $x,y\in D$.
By  \cref{prop:assocquadform} there exists a nondegenerate symmetric bilinear form $\varphi$ such that
$(V,h)\simeq \varphi \otimes \qf{1}_{(D,\theta)}$. Now suppose $(V,h)$ is hyperbolic. Then $n=\dim_D(V)$ must be even and by \cref{prop:hypuniq} we may assume that $\varphi\simeq \qf{1,-1,\ldots, 1,-1}$. Then as  $(V,q_h)\simeq \varphi\otimes (D,\Nrd_D)$ 
 by  \cref{prop:assocquadform} we have that $(V,q_h)$ is hyperbolic.

Conversely, suppose $(V,q_h)$ is hyperbolic. 
Then there exists a vector $x\in V\setminus\{0\}$ such that $h(x,x)=q(x)=0$. As $(V,h)$ is nondegenerate and even, by   \cite[Chapter 1, (3.7.4)]{Knus:1991}  there exists a vector $y\in V\setminus \{0\}$ such that $h(y,y)=0$ and $h(x,y)= 1$. Let $U$ be the subspace of $V$ generated by $x$ and $y$. Then $(U,h|_U)$ is the hyperbolic $2$-dimensional hermitian form over $(D,\theta)$ and hence there exists a hermitian form $(W,h'')$ such that $(V,h)\simeq(W,h'')\perp(U,h|_U)$ by  \cite[Chapter 1, (3.7.1)]{Knus:1991}. 
 It follows that $(V,q_h)\simeq (W,q_{h''})\perp (U,q_{h}|_U)$. As $(U,q_{h}|_U)$ is hyperbolic by the first part of the proof,  it follows by  Witt cancellation, \cite[(8.4)]{Elman:2008}, that $(W,q_{h''})$ is hyperbolic, and the result follows by induction on the dimension of $V$. 
\end{proof}

\begin{cor}\label{cor:isom} Let  $(V,h)$ and $(W,h')$ be nondegenerate even hermitian forms over $(D,\theta)$.
Then  $(V,h)\simeq(W,h')$ if and only if $(V,q_h)\simeq (W,q_{h'})$.
\end{cor}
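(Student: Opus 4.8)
The plan is to deduce \cref{cor:isom} from \cref{prop:hypiff} (via the adjoint functor) or, more directly, from \cref{cor:hypiso} together with \cref{lemma:trace}. I will take the latter route, since it is entirely within the framework of hermitian and quadratic forms and avoids passing through algebras with involution.

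First I would observe that the construction $(V,h)\mapsto (V,q_h)$ is compatible with orthogonal sums: if $(V,h)\simeq (V_1,h_1)\perp(V_2,h_2)$ then plainly $(V,q_h)\simeq(V_1,q_{h_1})\perp(V_2,q_{h_2})$, since on each summand the form $q_h$ restricts to $q_{h_i}$ and the two summands are $F$-orthogonal with respect to $b_{q_h}$. Likewise the construction is compatible with scaling by $-1$ at the level of the quadratic form in the sense relevant here: $q_{-h} = -q_h$. Both facts are immediate from the defining formula $q_h(x)=h(x,x)$.

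Next, for the forward direction, if $(V,h)\simeq(W,h')$ then any isometry is in particular an $F$-linear isomorphism $V\to W$ carrying $h$ to $h'$, hence carrying $q_h$ to $q_{h'}$, so $(V,q_h)\simeq(W,q_{h'})$. For the converse, suppose $(V,q_h)\simeq(W,q_{h'})$. Then $\dim_F(V)=\dim_F(W)$, and since $\dim_F(V)=\dim_D(V)\cdot\dim_F(D)$ with $\dim_F(D)$ fixed (equal to $4$ or $2$), we get $\dim_D(V)=\dim_D(W)$; call this common value $n$. Both $(V,h)$ and $(W,h')$ are nondegenerate and even, so by \cref{cor:hypiso} it suffices to show that $(V,h)\perp(-(W,h'))$ is hyperbolic. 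By \cref{lemma:trace} this is equivalent to showing that the associated quadratic form $q_{h\perp(-h')}$ is hyperbolic. Using the compatibility noted above, $q_{h\perp(-h')}\simeq q_h\perp q_{-h'}\simeq q_h\perp(-q_{h'})$. Since $(V,q_h)\simeq(W,q_{h'})$ are nonsingular quadratic forms of the same dimension (nonsingularity coming from \cref{prop:assocquadform}), the form $q_h\perp(-q_{h'})$ is isometric to $q_{h'}\perp(-q_{h'})$, which is hyperbolic. Hence $(V,h)\perp(-(W,h'))$ is hyperbolic, and \cref{cor:hypiso} gives $(V,h)\simeq(W,h')$.

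I do not expect a serious obstacle here; the result is essentially a formal consequence of the earlier propositions. The only point requiring a little care is making sure the quadratic-form analogue of "$\varphi\simeq\psi$ iff $\varphi\perp(-\psi)$ is hyperbolic" is available for \emph{nonsingular} quadratic forms — this is standard Witt cancellation, \cite[(8.4)]{Elman:2008}, already cited in the proof of \cref{lemma:trace} — and that the dimension bookkeeping correctly identifies $\dim_D(V)$ from $\dim_F(V)$, which is where one uses that $D$ is fixed throughout the section. Alternatively, one could phrase the whole argument through \cref{prop:hypiff}: $\Ad((V,h))\simeq\Ad((V,q_h))$ up to the adjoint construction is not literally asserted in the excerpt, so the direct route through \cref{cor:hypiso} and \cref{lemma:trace} is cleaner and is the one I would write up.
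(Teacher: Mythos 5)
Your proof is correct and follows essentially the same route as the paper: both reduce the isometry statement to a hyperbolicity statement via \cref{cor:hypiso} on the hermitian side and Witt cancellation \cite[(8.4)]{Elman:2008} on the quadratic side, then transfer between the two using \cref{lemma:trace} and the compatibility of $h\mapsto q_h$ with orthogonal sums and negation. The only cosmetic difference is that you treat the forward implication directly from the definition of isometry, whereas the paper runs the whole argument as a chain of equivalences.
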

\begin{proof}
By \cref{cor:hypiso} we have  $(V,h)\simeq (W,h')$ if and only if $(V,h)\perp(-(W,h'))$ is hyperbolic. Similarly, using  \cite[(8.4)]{Elman:2008} we see  that $(V,q_h)\simeq (W,q_{h'})$ if and only if $(V,q_h)\perp(-(W,q_{h'}))$ is hyperbolic. The result  follows from \cref{lemma:trace}.
\end{proof}

%\begin{remark}
%In \cite{jacobson:hermformstr} and \cite{sah:evenherm} hermitian forms over a quadratic separable extension with associated unitary involution are also studied.
%In this situation, a similar trace form can be defined and
 %a result similar to \cref{cor:isom} can also be obtained (see  \cite[(10.1.1)]{Scharlau:1985}). We shall  make use of this result in \cref{prop:lotsofstuff}.
%\end{remark}

\section{Trace forms  for involutions}\label{jacanal}

By \cite[(4.2)]{Knus:1998} even hermitian forms over a division algebra with symplectic (resp.~unitary) involution  correspond to algebras with symplectic (resp.~unitary) involutions. In this section we use this correspondence to  translate the hermitian form results from \S\ref{section:jacobson} to statements on symplectic and unitary involutions.

Throughout this section let $(B,\tau)$ be either an $F$-quaternion algebra or  a quadratic \'etale extension of $F$ with respective  canonical involution and let $(A,\sigma)$ be an $F$-algebra with symplectic  involution such that $A$ is Brauer equivalent to $Q$ or a split $F$-algebra with unitary involution with $Z(A)=Z(B)$ and $\s|_{Z(A)}=\tau|_{Z(B)}$ respectively.  Further, let $\pi=(B,\Nrd_B)$. 

\begin{prop}\label{prop:quadextnfullinv} 
There exists a nondegenerate  symmetric  bilinear form $\varphi$ over $F$ such that $(A,\sigma)\simeq \Ad(\varphi)\otimes  (B,\tau)$.
\end{prop}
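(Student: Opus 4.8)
The plan is to reduce the statement to the hermitian-form results of the previous two sections by realizing $(A,\sigma)$ as the adjoint of a hermitian form. First I would observe that since $(B,\tau)$ is either a quaternion algebra with canonical involution or a quadratic \'etale extension with canonical involution, and $(A,\sigma)$ is a symplectic (resp.\ unitary) involution on an algebra Brauer-equivalent to $B$ (resp.\ split with the matching centre), the algebra $A$ is isomorphic to $\End_D(V)$ where $(D,\theta)$ is, up to isomorphism, $(B,\tau)$ itself in the quaternion case (or $(B,\tau)$ is the division algebra when $Q$ is division, and $(\M 2(F),\ldots)$ appears when $Q$ is split), and in the unitary split case $D$ is the quadratic \'etale $F$-algebra $Z(A)$ with $\s|_{Z(A)}$. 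In all cases, by \cite[(4.1), (4.2)]{Knus:1998} together with the fact that symplectic and unitary involutions are adjoint to \emph{even} hermitian forms (the hermitian form over $(D,\theta)$ being automatically even in the unitary case, and in the symplectic quaternion case one uses that $\Symd(D,\theta)=F$ so evenness is the condition $h(x,x)\in F$, which holds for a hermitian — i.e.\ $\lambda=1$ — form), we may write $(A,\sigma)\simeq \Ad(h)$ for a nondegenerate even hermitian form $h=(V,h)$ over $(D,\theta)$.

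Next I would apply \cref{prop:assocquadform}, whose hypothesis is exactly that $(V,h)$ is a nondegenerate even hermitian form over $(D,\theta)$ with $(D,\theta)$ an $F$-quaternion division algebra or separable quadratic extension with canonical involution. This gives a nondegenerate symmetric bilinear form $\varphi$ over $F$ with $(V,h)\simeq\varphi\otimes\qf{1}_{(D,\theta)}$. Applying $\Ad$ to both sides and using \cref{prop:hypiff}$(3)$ together with the identity $\Ad(\qf{1}_{(D,\theta)})\simeq(D,\theta)$ recorded earlier, we obtain
$$(A,\sigma)\simeq\Ad(h)\simeq\Ad(\varphi\otimes\qf{1}_{(D,\theta)})\simeq\Ad(\varphi)\otimes\Ad(\qf{1}_{(D,\theta)})\simeq\Ad(\varphi)\otimes(D,\theta)\,.$$
It then remains to replace $(D,\theta)$ by $(B,\tau)$. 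In the unitary split case and the case where $Q$ is division this is immediate since $(D,\theta)\simeq(B,\tau)$.

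The main obstacle is the split quaternion case: when $A$ is Brauer equivalent to a \emph{split} quaternion algebra $Q\simeq\M 2(F)$, the division algebra $D$ underlying $A$ is $F$ itself, not $Q$, so $(D,\theta)$ above is not literally $(B,\tau)=(Q,\gamma)$. Here one instead argues that both $\Ad(\varphi)\otimes(\M 2(F),\ldots)$-type algebras and the desired form can be matched by absorbing the split quaternion factor: since $(B,\tau)=(Q,\gamma)$ is split symplectic, $\Ad(\varphi)\otimes(B,\tau)$ is itself symplectic of the same degree as $(A,\sigma)$ and, being hyperbolic (a split algebra with symplectic involution is hyperbolic by \cref{prop:hypiff}$(2)$), any choice of $\varphi$ of the right dimension works and uniqueness of the hyperbolic symplectic involution (\cref{cor:allhypsame}, or \cref{prop:hypuniq} at the level of forms) identifies it with $(A,\sigma)$ — one only needs $(A,\sigma)$ itself to be hyperbolic in that subcase, which holds because a symplectic involution on a split algebra is hyperbolic. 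Thus the case split into $Q$ division versus $Q$ split, and in the division case the clean tensor identity above does the job directly, while in the split case one falls back on the hyperbolicity and uniqueness statements; I expect handling this bookkeeping cleanly to be where most of the care is needed.
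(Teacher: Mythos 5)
Your overall plan coincides with the paper's: realise $(A,\sigma)$ as the adjoint of a nondegenerate even hermitian form, apply \cref{prop:assocquadform} to decompose it as $\varphi\otimes\qf{1}$, pass back through $\Ad$ via \cref{prop:hypiff}$(3)$, and fall back on \cref{cor:allhypsame} in the degenerate case. However, there are two genuine gaps. First, your parenthetical justification for evenness in the symplectic quaternion case is wrong: a $1$-hermitian form over $(D,\gamma)$ satisfies $h(x,x)\in\Sym(D,\gamma)$, not $h(x,x)\in\Symd(D,\gamma)=F$, and in characteristic $2$ these spaces differ (for a quaternion division algebra with canonical involution, $\Sym(D,\gamma)$ is $3$-dimensional while $\Symd(D,\gamma)=F$), so hermitian does \emph{not} automatically imply even. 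The correct input is that \cite[(4.2)]{Knus:1998} establishes a correspondence between symplectic involutions and \emph{even} hermitian forms over a symplectic $(D,\theta)$; this is what the paper invokes, and you should cite it as the reason the form is even rather than the flawed $\Symd$ argument.

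Second, you treat the split quaternion case via hyperbolicity and uniqueness, but you silently pass over the case $B\simeq F\times F$ in the unitary setting, writing that "the unitary split case is immediate since $(D,\theta)\simeq(B,\tau)$". When $Z(A)\simeq F\times F$, the algebra $D=Z(A)$ is not a division algebra nor a separable quadratic field extension, so \cref{prop:assocquadform} (which is stated only for those two situations) does not apply, and the clean tensor identity is not available. This case requires exactly the same hyperbolicity and \cref{cor:allhypsame} argument that you already have in place for the split quaternion case: both $(A,\sigma)$ (the exchange involution) and $(B,\tau)\simeq(F\times F,\mathrm{swap})$ are hyperbolic, hence so is $\Ad(\varphi)\otimes(B,\tau)$ for any $\varphi$, and uniqueness of hyperbolic involutions with prescribed restriction to the centre finishes it. The paper handles both degenerate cases ($Q$ split or $B\simeq F\times F$) uniformly at the outset; you should do the same.
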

\begin{proof} If $B$ is a split $F$-quaternion algebra or $B\simeq F\times F$ then $(A,\s)$ and $(B,\tau)$ are  hyperbolic and the result follows from \cref{cor:allhypsame}, taking $\varphi$ to be any symmetric bilinear form of the appropriate dimension.
Otherwise by \cite[(4.2)]{Knus:1998} there exists a nondegenerate  even hermitian form $\psi$ over $(B,\tau)$ such that $\Ad(\psi)\simeq (A,\s)$. By 
\cref{prop:assocquadform} there exists a  nondegenerate symmetric  bilinear form $\varphi$ over $F$ such that $\psi\simeq \varphi\otimes \qf{1}_{(B,\tau)}$. The result then follows from \cref{prop:hypiff}$(3)$.
%If $Q$ is split then $(A,\sigma)$ and $(Q,\gamma)$ are hyperbolic and the result is trivial by \cref{cor:allhypsame}. Otherwise $Q$ is division and we may identify $A$ with $\End_{Q}(V)$ for a finite-dimensional right $Q$-vector space $V$. 
 %By \cite[(4.2)]{Knus:1998},  $(A,\sigma)$ is adjoint to an even hermitian form $(V,h)$ over $(Q,\gamma)$.  
 %Let $\eta=(D,h')$ be the $1$-dimensional hermitian form over $(Q,\gamma)$ given by $h'(x,y)=\gamma(x)y$ for $x,y\in D$ and note that $\Ad(\eta)\simeq (Q,\gamma)$.
%As in \cref{prop:assocquadform}, there exists $a_1,\ldots, a_n\in F^\times$ such that $(V,h)\simeq \qf{a_1,\ldots, a_n}\otimes\qf{1}_{(Q,\gamma)}$. It then follows from \cref{prop:hypiff}$(3)$ that $(A,\sigma)\simeq \Ad(\qf{a_1,\ldots, a_r})\otimes  (Q,\gamma)$.
\end{proof}

\begin{prop}\label{lemma:canon}  Let $\varphi$ be a nondegenerate symmetric bilinear form over $F$. Then 
   $\Ad(\varphi)\otimes (B,\tau)$ is isotropic (resp. hyperbolic) if and only if $\Ad(\varphi)\otimes \Ad{(\pi)}$ is isotropic (resp. hyperbolic).
\end{prop}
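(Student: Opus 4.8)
The plan is to transport both sides of the claimed equivalences to hermitian and quadratic forms and then concatenate the equivalences already available from \cref{prop:hypiff}, \cref{lemma:trace} and \cref{prop:assocquadform}. I would first dispose of the case where $B$ is a split $F$-quaternion algebra or $B\simeq F\times F$. There $(B,\tau)$ has a hyperbolic idempotent $e$, so $1\otimes e$ is a hyperbolic idempotent of $\Ad(\varphi)\otimes(B,\tau)$; and $\pi=(B,\Nrd_B)$ is a hyperbolic quadratic form, so $\varphi\otimes\pi$ is hyperbolic and hence $\Ad(\varphi)\otimes\Ad(\pi)\simeq\Ad(\varphi\otimes\pi)$ is hyperbolic by \cref{prop:hypiff}$(2)$ and $(3)$. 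As a hyperbolic algebra with involution or with quadratic pair is in particular isotropic, all four assertions in the statement are true and both equivalences hold in this case.

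I would then assume $B=D$ is an $F$-quaternion division algebra or a separable quadratic extension of $F$ with $\tau$ its canonical involution, which is exactly the running hypothesis of \S\ref{section:jacobson}. Set $\psi=\varphi\otimes\qf{1}_{(B,\tau)}$, say $\psi=(B^n,h)$. Diagonalising $\varphi\simeq\qf{a_1,\ldots,a_n}$ gives $h(x,x)=\sum_{i=1}^{n}a_i\Nrd_B(x_i)$ for $x=(x_1,\ldots,x_n)\in B^n$, so $\psi$ is a nondegenerate even hermitian form over $(B,\tau)$ (evenness holding since $\Symd(B,\tau)=F$) and $q_\psi\simeq\qf{a_1,\ldots,a_n}\otimes(B,\Nrd_B)=\varphi\otimes\pi$. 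Then I would run the chain of equivalences: first, $\Ad(\varphi)\otimes(B,\tau)\simeq\Ad(\varphi)\otimes\Ad(\qf{1}_{(B,\tau)})\simeq\Ad(\psi)$ by \cref{prop:hypiff}$(3)$ together with $\Ad(\qf{1}_{(B,\tau)})\simeq(B,\tau)$; next, $\Ad(\psi)$ is isotropic (resp.\ hyperbolic) if and only if $\psi$ is, by \cref{prop:hypiff}$(2)$; next, $\psi$ is isotropic (resp.\ hyperbolic) if and only if $q_\psi\simeq\varphi\otimes\pi$ is, by \cref{lemma:trace}; and finally $\varphi\otimes\pi$ is isotropic (resp.\ hyperbolic) if and only if $\Ad(\varphi\otimes\pi)\simeq\Ad(\varphi)\otimes\Ad(\pi)$ is, once more by \cref{prop:hypiff}$(2)$ and $(3)$. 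Composing these four equivalences proves the proposition.

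I do not expect a deep obstacle; the argument is essentially a bookkeeping exercise. The two small computations — that $\psi$ is even and that $q_\psi\simeq\varphi\otimes\pi$ — just reproduce the orthogonal-basis argument from the proof of \cref{prop:assocquadform} and are routine. The points requiring attention are that \cref{prop:hypiff} must be invoked in its hermitian incarnation at the left-hand end of the chain and in its quadratic incarnation at the right-hand end, and that \cref{lemma:trace}, hence \cref{prop:assocquadform}, is available only in the nonsplit case, where $(B,\tau)$ really is a quaternion division algebra or a separable quadratic extension with its canonical involution. It is precisely this restriction — together with the fact that the hermitian half of \cref{prop:hypiff}$(2)$ presupposes a division algebra — that forces the separate, and fortunately trivial, treatment of the split case.
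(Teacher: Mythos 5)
Your proof is correct and follows essentially the same route as the paper: a separate, easy treatment of the split case (where both sides are hyperbolic), followed by the chain of equivalences passing through $\psi=\varphi\otimes\qf{1}_{(B,\tau)}$, \cref{prop:hypiff}$(2)$ and $(3)$, and \cref{lemma:trace} via the identification $q_\psi\simeq\varphi\otimes\pi$ from \cref{prop:assocquadform}. The only cosmetic difference is that you spell out the hyperbolic idempotent $1\otimes e$ and the diagonalisation computation explicitly, while the paper states these steps more tersely.
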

\begin{proof} 
If $B$ is a split $F$-quaternion algebra or $B\simeq F\times F$, then $\Ad(\varphi)\otimes (B,\tau)$ and $\pi$  are hyperbolic. Hence as is $\varphi\otimes \pi$  and  therefore also $\Ad(\varphi)\otimes \Ad {(\pi)}$ by \cref{prop:hypiff}$(2)$. Therefore we may assume that $B$ is a division $F$-algebra. 
%By \cref{prop:quadextnfullinv} we have that $(A,\sigma)\simeq \Ad(\varphi)\otimes (B,\tau)$ for some symmetric bilinear form $\varphi$ over $F$.
By \cref{prop:hypiff}$(3)$ we have   
   $\Ad(\varphi\otimes \qf{1}_{(B,\tau)})\simeq\Ad(\varphi)\otimes(B,\tau)$. Since $\varphi\otimes \qf{1}_{(B,\tau)}$ is isotropic (resp.~hyperbolic) if and only if $\varphi \otimes \pi$ is isotropic (resp.~hyperbolic) by \cref{lemma:trace},  by \cref{prop:hypiff}$(2)$ we have that    $\Ad(\varphi)\otimes (B,\tau)$ is isotropic (resp.~hyperbolic) if and only if $\varphi \otimes \pi$ is isotropic (resp.~hyperbolic). This is equivalent to the isotropy (resp.~hyperbolicity) of $\Ad(\varphi\otimes\pi)$ by \cref{prop:hypiff}$(2)$.
 Finally, $\Ad(\varphi\otimes\pi)\simeq \Ad(\varphi)\otimes \Ad(\pi)$ by \cref{prop:hypiff}$(3)$, giving the result. 
\end{proof}

\begin{prop}\label{cor:isomsepqat} Let  $\varphi_1$ and $\varphi_2$  be nondegenerate symmetric bilinear forms over $F$.
Then $\Ad(\varphi_1)\otimes (B,\tau)\simeq\Ad(\varphi_2)\otimes (B,\tau)$ if and only if $\Ad(\varphi_1)\otimes\Ad(\pi) \simeq\Ad(\varphi_2)\otimes \Ad(\pi)$.
\end{prop}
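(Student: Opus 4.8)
The plan is to reduce the statement about isomorphism of algebras with involution to the corresponding statement about isomorphism of hermitian forms, and then to invoke the Jacobson correspondence from \S\ref{section:jacobson}. First I would dispose of the degenerate case exactly as in the proofs of \cref{prop:quadextnfullinv} and \cref{lemma:canon}: if $B$ is a split quaternion algebra or $B\simeq F\times F$, then both $(B,\tau)$ and $\pi$ are hyperbolic, so $\Ad(\varphi_1)\otimes(B,\tau)$, $\Ad(\varphi_2)\otimes(B,\tau)$, $\Ad(\varphi_1)\otimes\Ad(\pi)$ and $\Ad(\varphi_2)\otimes\Ad(\pi)$ are all hyperbolic, of the appropriate (matching) degrees, so by \cref{cor:allhypsame} the asserted equivalence holds trivially on both sides. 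Hence we may assume $B$ is a division $F$-algebra, so that $(D,\theta)=(B,\tau)$ falls under the running hypotheses of \S\ref{section:jacobson}.

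Next I would translate the left-hand side. By \cref{prop:hypiff}$(3)$ we have $\Ad(\varphi_i)\otimes(B,\tau)\simeq\Ad(\varphi_i\otimes\qf{1}_{(B,\tau)})$ for $i=1,2$, so the left-hand isomorphism is equivalent, via \cref{prop:hypiff}$(1)$, to $\varphi_1\otimes\qf{1}_{(B,\tau)}\simeq c\,(\varphi_2\otimes\qf{1}_{(B,\tau)})$ for some $c\in F^\times$; and since scaling a bilinear form commutes with the tensor factor, $c\,(\varphi_2\otimes\qf{1}_{(B,\tau)})\simeq(c\varphi_2)\otimes\qf{1}_{(B,\tau)}$. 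Both $\varphi_1\otimes\qf{1}_{(B,\tau)}$ and $(c\varphi_2)\otimes\qf{1}_{(B,\tau)}$ are nondegenerate even hermitian forms over $(B,\tau)$, so \cref{cor:isom} converts the condition into $(V,q_{h_1})\simeq(V,q_{c h_2})$, i.e., using the explicit description in \cref{prop:assocquadform}, into $\varphi_1\otimes\pi\simeq (c\varphi_2)\otimes\pi\simeq c\,(\varphi_2\otimes\pi)$. Running the symmetric argument for the right-hand side, \cref{prop:hypiff}$(3)$ gives $\Ad(\varphi_i)\otimes\Ad(\pi)\simeq\Ad(\varphi_i\otimes\pi)$, and \cref{prop:hypiff}$(1)$ converts the right-hand isomorphism into $\varphi_1\otimes\pi\simeq c'\,(\varphi_2\otimes\pi)$ for some $c'\in F^\times$. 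Thus both sides are equivalent to the single statement ``$\varphi_1\otimes\pi$ is similar to $\varphi_2\otimes\pi$ over $F$'', which closes the argument.

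The main subtlety — what I expect to be the only nontrivial point to get exactly right — is bookkeeping the similarity factor through the two different correspondences. On the involution side, \cref{prop:hypiff}$(1)$ only gives isomorphism of adjoint algebras up to an unnamed scalar, and on the hermitian side \cref{cor:isom} is an honest ``if and only if'' for isometry (no scalar), so the scalar must be absorbed into one of the $\varphi_i$ before applying \cref{cor:isom} and pulled back out afterwards, using that $(c\varphi)\otimes\qf{1}_{(B,\tau)}\simeq c(\varphi\otimes\qf{1}_{(B,\tau)})$ and $(c\varphi)\otimes\pi\simeq c(\varphi\otimes\pi)$. Everything else is a routine chain of the previously established equivalences, and no new input (in particular no characteristic hypothesis) is needed beyond what \S\ref{section:jacobson} already supplies. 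Alternatively, one can shortcut the whole argument by combining \cref{cor:hypiso} with \cref{lemma:canon}: $\Ad(\varphi_1)\otimes(B,\tau)\simeq\Ad(\varphi_2)\otimes(B,\tau)$ iff the ``difference'' $\bigl(\Ad(\varphi_1)\perp\Ad(-\varphi_2)\bigr)\otimes(B,\tau)$ is hyperbolic iff (by \cref{lemma:canon}) $\bigl(\Ad(\varphi_1)\perp\Ad(-\varphi_2)\bigr)\otimes\Ad(\pi)$ is hyperbolic iff $\Ad(\varphi_1)\otimes\Ad(\pi)\simeq\Ad(\varphi_2)\otimes\Ad(\pi)$, where the outer equivalences use \cref{prop:hypiff} together with the fact that $\Ad(\varphi_1)\perp\Ad(-\varphi_2)\simeq\Ad(\varphi_1\perp(-\varphi_2))$; I would likely present this second route as it is shorter and reuses \cref{lemma:canon} directly.
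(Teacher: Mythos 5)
Your main argument is correct and is essentially the paper's proof: translate both sides to similarity of forms via \cref{prop:hypiff}$(1)$ and $(3)$, then bridge with \cref{cor:isom}; you merely run the chain in the opposite direction and add an explicit disposal of the case where $B$ is split or $B\simeq F\times F$ (a case the paper's proof silently omits, even though \cref{cor:isom} is only stated under the division hypothesis, so your extra paragraph is a welcome patch rather than redundancy). One caveat on your proposed ``shortcut'': the expression $\Ad(\varphi_1)\perp\Ad(-\varphi_2)$ is not meaningful (one cannot take orthogonal sums of algebras with involution), you would need to work with $\Ad(\varphi_1\perp(-c\varphi_2))$ for a scalar $c$, and that scalar must then be tracked through \cref{lemma:canon} exactly as in the main route --- so the alternative is not actually shorter once written out carefully, and I would stick with the first argument.
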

\begin{proof}
% Assume that $\varphi_1\otimes \pi\simeq \varphi_2\otimes \pi$. 
By Proposition \ref{prop:hypiff}$(1)$ and $(3)$
we have  $\Ad(\varphi_1)\otimes\Ad(\pi) \simeq\Ad(\varphi_2)\otimes \Ad(\pi) $ if and only if 
$\varphi_1\otimes \pi\simeq c\varphi_2\otimes \pi$ for some $c\in F^\times$. This is equivalent to 
$\varphi_1\otimes \qf{1}_{(D,\theta)}\simeq c \varphi_2\otimes  \qf{1}_{(D,\theta)}$
by \cref{cor:isom} which is further equivalent to  $\Ad(\varphi_1)\otimes (B,\tau)\simeq\Ad(\varphi_2)\otimes (B,\tau)$ by 
\cref{prop:hypiff}$(1)$ and $(3)$.
%Let $\eta=(D,h')$ be the $1$-dimensional hermitian form over $(K,\tau)$ given by $h'(x,y)=\tau(x)y$ for $x,y\in D$ and note that $\Ad(\eta)\simeq (K,\tau)$.
%By   \cref{prop:hypiff}$(3)$ we have  
%\begin{eqnarray*}\Ad((\varphi_1\perp(-\varphi_2))\otimes \qf{1}_{(B,\tau)})&\simeq& \Ad(\varphi_1\perp(-\varphi_2))\otimes\Ad(\qf{1}_{(B,\tau)})
%\\& \simeq &\Ad(\varphi_1\perp(-\varphi_2))\otimes(B,\tau)\,.
%\end{eqnarray*}
%As in   \cref{lemma:canon}, we have that  $\Ad(\varphi_1\perp(-\varphi_2))\otimes(B,\tau)$ is hyperbolic if and only if  the same holds for $(\varphi_1\perp(-\varphi_2))\otimes \pi$. Assume that $\varphi_1\otimes \pi\simeq \varphi_2\otimes \pi$.
%Then
%$\Ad(\varphi_1\perp(-\varphi_2))\otimes \pi$ is hyperbolic and hence  by \cref{prop:hypiff}$(2)$ we have $(\varphi_1\perp(-\varphi_2))\otimes \qf{1}_{(B,\tau)}$ is hyperbolic. That $\varphi_1\otimes \qf{1}_{(B,\tau)}\simeq \varphi_2\otimes\qf{1}_{(B,\tau)}$ now follows from \cref{cor:hypiso}. 
%Taking the adjoint $F$-algebras with involution to these hermitian forms  gives $\Ad(\varphi_1)\otimes (B,\tau)\simeq\Ad(\varphi_2)\otimes (B,\tau)$.
%Conversely, if $\Ad(\varphi_1)\otimes(Q,\gamma)\simeq\Ad(\varphi_2)\otimes (Q,\gamma)$ then  $\Ad(\varphi_1)\otimes(Q,\gamma)\boxtimes (Q,\gamma)\simeq\Ad(\varphi_2)\otimes (Q,\gamma)\boxtimes (Q,\gamma)$. As $\Ad(\pi)\simeq (Q,\gamma)\boxtimes {(Q,\gamma)}$, it follows by  \cite[(5.11)]{Knus:1998} that $\varphi_1\otimes\pi$ is similar to $\varphi_2\otimes\pi$, and hence $\varphi_1\otimes \pi\simeq \varphi_2\otimes \pi$ by
\end{proof}

In the case where $(B,\tau)$ is an $F$-algebra with symplectic involution, we get the following reformation of Propositions \ref{lemma:canon} and \ref{cor:isomsepqat}.

\begin{cor}\label{lemma:canonnicer}  Assume  $(B,\tau)$ is an $F$-algebra with symplectic involution 
Then $(A,\sigma)$ is isotropic (resp. hyperbolic) if and only if ${(A,\sigma)}\boxtimes {(B,\tau)}$ is isotropic (resp. hyperbolic).  Further,
 let $(A',\s')$ be an $F$-algebra with symplectic involution such that $A'$ is Brauer equivalent to $B$. 
 Then $(A,\s)\simeq (A',\s')$ if and only if ${(A,\sigma)}\boxtimes {(B,\tau)}\simeq {(A',\sigma')}\boxtimes {(B,\tau)}$.
\end{cor}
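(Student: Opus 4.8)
The plan is to deduce \cref{lemma:canonnicer} directly from \cref{lemma:canon} and \cref{cor:isomsepqat} by identifying the two notations $(A,\s)\boxtimes(B,\tau)$ and $\Ad(\varphi)\otimes\Ad(\pi)$. First I would invoke \cref{prop:quadextnfullinv}: since $(B,\tau)$ is an $F$-algebra with symplectic involution with $A$ Brauer equivalent to $B$, there exists a nondegenerate symmetric bilinear form $\varphi$ over $F$ with $(A,\s)\simeq\Ad(\varphi)\otimes(B,\tau)$. Then, because $(B,\tau)$ is itself symplectic (so that $\boxtimes$ is defined with $(B,\tau)$ in the right-hand slot), I would use the associativity identity recorded at the end of \S\ref{section:basicsalgs}, namely $\bigl((C,\gamma)\otimes(B,\tau)\bigr)\boxtimes(A,\s)\simeq(C,\gamma)\otimes(B,\tau)\otimes(A,\s,f)$, together with the fact that $(B,\tau)\boxtimes(A,\s)\simeq(B,\tau)\otimes(A,\s,f)$ for any semi-trace $f$. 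Concretely, I expect to write $(A,\s)\boxtimes(B,\tau)\simeq\bigl(\Ad(\varphi)\otimes(B,\tau)\bigr)\boxtimes(B,\tau)\simeq\Ad(\varphi)\otimes\bigl((B,\tau)\boxtimes(B,\tau)\bigr)$, and then observe that $(B,\tau)\boxtimes(B,\tau)\simeq\Ad(\pi)$ — this last isomorphism is the key computational input, and it should follow from the case $\varphi=\qf{1}$ of \cref{lemma:canon}'s underlying construction, or more directly from \cref{prop:hypiff}$(3)$ applied to $\pi=(B,\Nrd_B)$ viewed via $\qf{1}_{(B,\tau)}$, since $(B,\tau)\boxtimes(B,\tau)\simeq(B,\tau)\otimes(B,\tau,f)\simeq\Ad(\qf{1})\otimes(B,\tau)\boxtimes$-style reductions land on $\Ad((B,\Nrd_B))=\Ad(\pi)$.

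Once the identification $(A,\s)\boxtimes(B,\tau)\simeq\Ad(\varphi)\otimes\Ad(\pi)$ is in hand, the first assertion is immediate: by \cref{prop:hypiff}$(2)$ (applied to the adjoint of the hermitian form $\psi\simeq\varphi\otimes\qf{1}_{(B,\tau)}$ giving $(A,\s)$), $(A,\s)$ is isotropic (resp.\ hyperbolic) if and only if $\Ad(\varphi)\otimes(B,\tau)$ is, which by \cref{lemma:canon} holds if and only if $\Ad(\varphi)\otimes\Ad(\pi)$ is isotropic (resp.\ hyperbolic), i.e.\ if and only if $(A,\s)\boxtimes(B,\tau)$ is. For the second assertion, given $(A',\s')$ with $A'$ Brauer equivalent to $B$, \cref{prop:quadextnfullinv} supplies $\varphi'$ with $(A',\s')\simeq\Ad(\varphi')\otimes(B,\tau)$ and $(A',\s')\boxtimes(B,\tau)\simeq\Ad(\varphi')\otimes\Ad(\pi)$. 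Then $(A,\s)\simeq(A',\s')$ iff $\Ad(\varphi)\otimes(B,\tau)\simeq\Ad(\varphi')\otimes(B,\tau)$, which by \cref{cor:isomsepqat} holds iff $\Ad(\varphi)\otimes\Ad(\pi)\simeq\Ad(\varphi')\otimes\Ad(\pi)$, i.e.\ iff $(A,\s)\boxtimes(B,\tau)\simeq(A',\s')\boxtimes(B,\tau)$.

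I expect the main obstacle to be bookkeeping around the $\boxtimes$ notation and the semi-traces: one must be careful that the quadratic pair on $(A,\s)\boxtimes(B,\tau)$ produced from $(A,\s)\simeq\Ad(\varphi)\otimes(B,\tau)$ via associativity genuinely matches, as an algebra with quadratic pair, the one obtained from $\Ad(\varphi)\otimes\Ad(\pi)$ — this hinges on the remark after the definition of $\boxtimes$ that $(B,\tau)\boxtimes(A,\s)\simeq(B,\tau)\otimes(A,\s,f)$ is independent of the chosen semi-trace $f$, and on the associativity statement from \cite[(5.3)]{dolphin:PFC}. A secondary point is checking $(B,\tau)\boxtimes(B,\tau)\simeq\Ad(\pi)$ cleanly; here one can alternatively argue that both sides are split algebras with quadratic pair (since $B\otimes_F B$ is split, being Brauer equivalent to $B\otimes_F B^{\mathsf{op}}$ up to the quaternion case, or directly split of degree $4$ or $2^2$) adjoint to the nonsingular quadratic form $\pi=(B,\Nrd_B)$, the latter by tracing through \cref{prop:assocquadform} with $\psi=\qf{1}_{(B,\tau)}$ and $\varphi=\qf{1}$.
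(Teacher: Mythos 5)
Your overall strategy matches the paper's exactly: reduce via \cref{prop:quadextnfullinv} to $(A,\sigma)\simeq\Ad(\varphi)\otimes(B,\tau)$, identify $(A,\sigma)\boxtimes(B,\tau)$ with $\Ad(\varphi)\otimes\Ad(\pi)$, and then read off both assertions from \cref{lemma:canon} and \cref{cor:isomsepqat}. The hinge of that identification is the isomorphism $(B,\tau)\boxtimes(B,\tau)\simeq\Ad(\pi)$ \emph{as $F$-algebras with quadratic pair}, and this is precisely where your write-up has a genuine gap: the paper does not derive it either, but cites \cite[(2.9)]{dolphin:conic}, whereas each of the derivations you sketch fails to actually produce it.

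Concretely: \cref{lemma:canon} with $\varphi=\qf{1}$ only yields an isotropy/hyperbolicity equivalence, not an isomorphism of quadratic pairs, so ``the case $\varphi=\qf{1}$ of \cref{lemma:canon}'s underlying construction'' does not give the identification you need. \cref{prop:hypiff}$(3)$ says $\Ad(\varphi\otimes\psi)\simeq\Ad(\varphi)\otimes\Ad(\psi)$ with $\varphi$ a symmetric bilinear form over $F$; applied with $\varphi=\qf{1}$ it is a tautology, and there is no clause in it that tensors $(B,\tau)$ against a quadratic pair \emph{on $B$ itself} and lands on $\Ad((B,\Nrd_B))$ — the ``$\boxtimes$-style reductions'' you gesture at are not covered by that statement. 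Your fallback (``both sides are split algebras with quadratic pair of the right degree, and $\pi$ is the quadratic form that comes out of \cref{prop:assocquadform}'') is also insufficient: having a split underlying algebra of degree $4$ does not pin down the quadratic pair, and \cref{prop:assocquadform} relates a hermitian form $(V,h)$ to the quadratic form $(V,q_h)$ on the underlying $F$-space, but it does not identify the quadratic pair on $B\otimes_F B$ (carrying the semi-trace $h$ with $h(s_1\otimes s_2)=0$ for $s_1,s_2\in\Sym(B,\tau)$) with the adjoint quadratic pair of $\Nrd_B$ on $\End_F(B)$. Establishing that requires tracing $\tau\otimes\tau$ and the semi-trace through the sandwich isomorphism $B\otimes_F B\simeq\End_F(B)$, which is the actual computation done in \cite[(2.9)]{dolphin:conic}. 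Once that reference is invoked in place of your sketched shortcuts, the rest of your argument is correct and coincides with the paper's.
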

\begin{proof}  
By \cite[(2.9)]{dolphin:conic} we have  $\Ad(\pi)\simeq (B,\tau)\boxtimes {(B,\tau)}$. As $(A,\s)\simeq \Ad(\varphi)\otimes (B,\tau)$ for some nondegenerate  symmetric  bilinear form $\varphi$ over $F$  by \cref{prop:quadextnfullinv}, the 
 result follows immediately from Propositions \ref{lemma:canon} and \ref{cor:isomsepqat}.
 %If $Q$ is split, then $(A,\s)$ is hyperbolic and hence as is ${(A,\sigma)}\boxtimes {(Q,\gamma)}$ by \cite[(A.5)]{Tignol:galcohomgps}.
%Otherwise, by \cref{prop:quadextnfullinv} we have that $(A,\sigma)\simeq \Ad(\psi)\otimes (Q,\gamma)$ for some symmetric bilinear form $\psi$ over $F$.
%Hence
%$${(A,\sigma)}\boxtimes {(Q,\gamma)}\simeq   \Ad(\psi)\otimes(Q,\gamma)\boxtimes {(Q,\gamma)}\simeq \Ad(\psi)\otimes \Ad(\pi).$$
%Let $\eta=(D,h')$ be the $1$-dimensional hermitian form over $(Q,\gamma)$ given by $h'(x,y)=\gamma(x)y$ for $x,y\in D$ and note that $\Ad(\eta)\simeq (Q,\gamma)$.
% By \cref{prop:hypiff}$(3)$ we have 
  % $\Ad(\psi\otimes \qf{1}_{(Q,\gamma)})\simeq\Ad(\psi)\otimes(Q,\gamma)$. Since $\psi\otimes \qf{1}_{(Q,\gamma)}$ is isotropic (resp.~hyperbolic) if and only if $\psi \otimes \pi$ is isotropic (resp.~hyperbolic) by \cref{lemma:trace},  by \cref{prop:hypiff}$(2)$ we have that  $(A,\sigma)$ is isotropic (resp.~hyperbolic) if and only if $\psi \otimes \pi$ is isotropic (resp.~hyperbolic). This is equivalent to the isotropy (resp.~hyperbolicity) of $\Ad(\psi\otimes\pi)$.
 %Finally, $\Ad(\psi\otimes\pi)\simeq \Ad(\psi)\otimes \Ad(\pi)$ by \cite[(5.19)]{Knus:1998}, giving the result. 
 \end{proof}

\section{Totally Decomposable Involutions}\label{decompinv}

We now give our results on totally decomposable symplectic and unitary involutions. We first recall the notion of a function field of a quadratic form.
Let $\rho$ be a nonsingular  quadratic form over $F$.
If $\dim(\rho)\geqslant 3$ or if $\rho$ is anisotropic and $\dim(\rho)=2$, then we call the function field of the projective quadric over $F$ given by $\rho$ the \emph{function field of $\rho$} and denote it by $F(\rho)$. In the remaining cases we set $F(\rho)=F$.
This agrees with the definition in \cite[\S22]{Elman:2008}.

%For every pair of $F$-algebras with symplectic involution,  $(A_1,\sigma_1)$ and $(A_2,\sigma_2)$, there exists a unique quadratic pair associated with the algebra with involution $(A_1\otimes A_2,\sigma_1\otimes \sigma_2)$
 %such that $f_\otimes(s_1\otimes s_2)=0$ for all $s_i\in\mathrm{Skew}(A_i,\sigma_i)$, $i=1,2$ (see \cite[(5.21)]{Knus:1998}).  We call this the \emph{canonical quadratic pair on $(A_1\otimes A_2,\sigma_1\otimes \sigma_2)$} and denote it by $(A_1\otimes A_2,\sigma_1\otimes \sigma_2, f_\otimes)$ .

%\begin{proof} By 

%Since $(A,\sigma)$ is symplectic it becomes hyperbolic over any field extension that splits $Q$. 
%n particular, as  there exists a separable quadratic extension $K/F$ such that $Q_K$ is split, we have a that $(A,\sigma)_K$ is hyperbolic..
 %Let $\tau$ be the non-trivial $F$-automorphism on $K$. As $(A,\sigma)_K$ is hyperbolic,   \cite[(1.15)]{Tignol:galcohomgps} says that there exists an injective $F$-algebra with involution homomorphism  $(K,\tau)\hookrightarrow (A,\sigma)$. The result then follows from  \cite[(7.2)]{dolphin:quadpairs}.\end{proof}

\begin{thm}\label{prop:symp} 
Let $(Q,\gamma)$ be an $F$-quaternion algebra with canonical involution and 
 $(A,\sigma)$ be an $F$-algebra with symplectic involution such that $A$ is Brauer equivalent to $Q$ and $\deg(A)=2^n$  with $n\geqslant 1$.  The following are equivalent:
\begin{itemize}
\item[$(i)$] $(A,\sigma)$ is totally  decomposable.
\item[$(ii)$] ${(A,\sigma)}\boxtimes {(Q,\gamma)}$  is adjoint to a Pfister form.
\item[$(iii)$] $(A,\sigma)\simeq \Ad(\psi)\otimes (Q,\gamma)$ for some  bilinear Pfister form $\psi$ over $F$.
\item[$(iv)$] For any field extension $K/F$, $(A,\sigma)_K$ is either anisotropic or hyperbolic.
\end{itemize}
\end{thm}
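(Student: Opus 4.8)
The plan is to establish the chain of equivalences by proving $(i) \Rightarrow (ii) \Rightarrow (iii) \Rightarrow (iv) \Rightarrow (i)$, using the trace form machinery from \S\ref{jacanal} to transfer everything to the level of quadratic forms over $F$, where \cref{Pfister} does the heavy lifting.

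\medskip

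\textbf{$(i) \Rightarrow (ii)$.} Suppose $(A,\sigma) \simeq \bigotimes_{i=1}^n (Q_i,\sigma_i)$ with each $Q_i$ an $F$-quaternion algebra with involution. Since $A$ is Brauer equivalent to $Q$, exactly an odd number of the $Q_i$ are division (equivalently their Brauer classes sum to that of $Q$), and since the only quaternion algebras with symplectic involution are $(Q_i,\gamma_i)$ with $\gamma_i$ canonical, while those with orthogonal involution are of the form $\Ad$ of a binary bilinear form, one can rearrange (using associativity and the identities $(Q_i,\gamma_i)\otimes(Q_i,\gamma_i) \simeq \Ad(\Nrd_{Q_i})$ from \cite[(2.9)]{dolphin:conic} and Brauer-equivalent quaternion algebras with canonical involutions being adjoint-equivalent after tensoring) to write $(A,\sigma) \simeq \Ad(\psi') \otimes (Q,\gamma)$ for a suitable bilinear form $\psi'$ built from the norm forms $\Nrd_{Q_i}$ and the binary forms; since each $\Nrd_{Q_i}$ is a $2$-fold Pfister form and a tensor product of bilinear Pfister forms is again a Pfister form, $\psi'$ can be taken to be a bilinear Pfister form $\psi$. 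Then by \cref{lemma:canonnicer} and the associativity identities in \S\ref{section:basicsalgs}, ${(A,\sigma)}\boxtimes{(Q,\gamma)} \simeq \Ad(\psi)\otimes(Q,\gamma)\boxtimes(Q,\gamma) \simeq \Ad(\psi)\otimes\Ad(\pi)$ where $\pi = (Q,\Nrd_Q)$; by \cref{prop:hypiff}$(3)$ this is $\Ad(\psi\otimes\pi)$, and $\psi\otimes\pi$ is a product of a bilinear Pfister form with a quadratic Pfister form, hence a quadratic Pfister form. This already essentially gives $(iii)$ too.

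\medskip

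\textbf{$(ii) \Leftrightarrow (iii)$ and $(iii) \Rightarrow (iv)$.} The equivalence of $(ii)$ and $(iii)$ follows from \cref{lemma:canonnicer}: ${(A,\sigma)}\boxtimes{(Q,\gamma)} \simeq \Ad(\psi)\otimes(Q,\gamma)\boxtimes(Q,\gamma) \simeq \Ad(\psi\otimes\pi)$, and conversely a Pfister form adjoint to ${(A,\sigma)}\boxtimes{(Q,\gamma)}$ must, by dimension count and \cref{prop:hypiff}$(1)$, be similar to $\psi\otimes\pi$ for the $\psi$ produced by \cref{prop:quadextnfullinv}, forcing $\psi$ to be (similar to, hence, after scaling, equal to) a bilinear Pfister form. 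For $(iii) \Rightarrow (iv)$: fix $K/F$. Then $(A,\sigma)_K \simeq \Ad(\psi_K)\otimes(Q,\gamma)_K$, so by \cref{lemma:canon} (applied over $K$) $(A,\sigma)_K$ is isotropic, resp.\ hyperbolic, if and only if $\Ad(\psi_K)\otimes\Ad(\pi_K) = \Ad((\psi\otimes\pi)_K)$ is; by \cref{prop:hypiff}$(2)$ that holds if and only if $(\psi\otimes\pi)_K$ is isotropic, resp.\ hyperbolic. Since $\psi\otimes\pi$ is a quadratic Pfister form, for every $K$ it is either anisotropic or hyperbolic (one direction of \cref{Pfister}), and an isotropic Pfister form is hyperbolic; hence $(A,\sigma)_K$ is anisotropic or hyperbolic.

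\medskip

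\textbf{$(iv) \Rightarrow (i)$.} This is the step I expect to be the main obstacle, as it must produce a decomposition from a purely behavioural hypothesis. By \cref{prop:quadextnfullinv} write $(A,\sigma) \simeq \Ad(\varphi)\otimes(Q,\gamma)$ with $\varphi$ a nondegenerate symmetric bilinear form over $F$ of dimension $2^{n-1}$; set $\rho = \varphi\otimes\pi$, a nonsingular quadratic form of dimension $2^n$. For any $K/F$, by \cref{lemma:canon} over $K$ and \cref{prop:hypiff}$(2)$, $\rho_K$ is isotropic, resp.\ hyperbolic, iff $(A,\sigma)_K$ is; so the hypothesis $(iv)$ says $\rho_K$ is anisotropic or hyperbolic for every $K$, whence by \cref{Pfister} $\rho$ is similar to a quadratic Pfister form $\tau$, say $\rho \simeq c\tau$. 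Now $\rho = \varphi\otimes\pi$ is visibly divisible by $\pi$ (it is hyperbolic over $F(\pi) = F(\Nrd_Q)$, the function field of the conic of $Q$), and a Pfister form similar to it is then also divisible by $\pi$; by the structure theory of Pfister forms divisible by a fixed Pfister factor (the relevant ``common slot''/divisibility statement for quadratic Pfister forms, valid in all characteristics, cf.\ \cite{Elman:2008}) one can write $\tau \simeq \psi\otimes\pi$ for a bilinear Pfister form $\psi$ of the appropriate fold. Comparing, $\varphi\otimes\pi \simeq c\,\psi\otimes\pi$; since $\pi$ is an anisotropic (we may assume $Q$ division, the split case being handled by $(A,\sigma)$ hyperbolic and \cref{cor:allhypsame}) round quadratic Pfister form, cancellation of the $\pi$-factor (Witt cancellation together with the roundness of $\pi$, or directly \cref{cor:isom}/\cref{cor:isomsepqat}) gives $\Ad(\varphi)\otimes(Q,\gamma) \simeq \Ad(\psi)\otimes(Q,\gamma)$, i.e.\ $(iii)$. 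Finally, $\Ad(\psi)$ is totally decomposable (a product of $\Ad$ of binary forms, by \cref{prop:hypiff}$(3)$ applied to the Pfister factorisation of $\psi$), so $(A,\sigma) \simeq \Ad(\psi)\otimes(Q,\gamma)$ is a tensor product of $(n-1)$ orthogonal quaternion algebras with $(Q,\gamma)$, hence totally decomposable, establishing $(i)$. The delicate points throughout are the characteristic-two bookkeeping (ensuring all the cancellation and divisibility statements for Pfister forms are invoked in forms valid when $\kar(F)=2$) and the passage from ``$\rho$ similar to a Pfister form and divisible by $\pi$'' to ``$\rho \simeq c\,\psi\otimes\pi$ with $\psi$ bilinear Pfister''.
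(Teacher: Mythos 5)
Your overall strategy — transferring everything to quadratic forms via the trace-form machinery of \S\ref{jacanal} and then applying \cref{Pfister} — is the same as the paper's, and your arguments for $(ii)\Leftrightarrow(iii)$, $(iii)\Rightarrow(iv)$, and $(iv)\Rightarrow(iii)\Rightarrow(i)$ are essentially sound (with one remark: the paper closes the cycle as $(i)\Rightarrow(ii)\Rightarrow(iii)\Rightarrow(i)$ plus $(ii)\Leftrightarrow(iv)$, while you go $(iv)\Rightarrow(iii)\Rightarrow(i)$ directly, but the underlying steps are the same modulo relabelling). The genuine problem lies in your proof of $(i)\Rightarrow(ii)$.

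You attempt to derive $(i)\Rightarrow(iii)$ (and hence $(ii)$) by ``rearranging'' the quaternion factors, using $(Q_i,\gamma_i)\otimes(Q_i,\gamma_i)\simeq \Ad(\Nrd_{Q_i})$ to collapse matched pairs and pull out a residual $(Q,\gamma)$. This does not work, and indeed \emph{cannot} work without real input, because the statement you are trying to prove is precisely the Pfister Factor Conjecture (in its symplectic / quadratic-pair incarnation): that a totally decomposable algebra with involution whose underlying algebra is split (or Brauer equivalent to a fixed quaternion algebra) is adjoint to a Pfister form. This was open for decades and was resolved by Becher in 2008 \cite{Becher:qfconj} in characteristic $\neq 2$, and by Becher--Dolphin \cite[(7.3)]{dolphin:PFC} in characteristic $2$; the paper's proof simply cites these. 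Your argument has two specific flaws as well. First, the assertion that ``exactly an odd number of the $Q_i$ are division'' does not follow from $[Q_1]+\cdots+[Q_n]=[Q]$ in the Brauer group: the individual classes $[Q_i]$ can be spread arbitrarily over the $2$-torsion subgroup (e.g.\ three pairwise non-isomorphic division algebras whose classes sum to $[Q]$), and there is no a priori way to pair them up so that all but one cancel and the remaining factor is isomorphic to $Q$. Second, the claim that an $F$-quaternion algebra with orthogonal involution is ``of the form $\Ad$ of a binary bilinear form'' holds only when the quaternion algebra is split; a quaternion \emph{division} algebra (in characteristic $\neq 2$) admits orthogonal involutions which are not adjoint to any bilinear form over $F$. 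So your reduction of the orthogonal factors to split ones is also unjustified. The correct move here is to cite \cite[Thm.~1]{Becher:qfconj} when $\kar(F)\neq 2$ (after observing that $(A,\sigma)\boxtimes(Q,\gamma)$ is then a totally decomposable orthogonal involution on a split algebra) and \cite[(7.3)]{dolphin:PFC} when $\kar(F)=2$, exactly as the paper does.
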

\begin{proof} %Note first that if $Q$ is split, then $(A,\sigma)$, $(Q,\gamma)$  and $(A,\s)\boxtimes(Q,\gamma)$
%(by  \cite[(A.5)]{Tignol:galcohomgps})
%are hyperbolic and all conditions are met trivially by \cref{cor:allhypsame}. Therefore we may assume that $Q$ is a division $F$-algebra.
% Let $\pi=(Q,\Nrd_Q)$. 
 %Then we have that  $\Ad(\pi)\simeq{(Q,\gamma)}\boxtimes{(Q,\gamma)}$ by \cite[(2.9)]{dolphin:conic}. 
  %Let $\eta=(D,h')$ be the $1$-dimensional hermitian form over $(Q,\gamma)$ given by $h'(x,y)=\gamma(x)y$ for $x,y\in D$ and note that $\Ad(\eta)\simeq (Q,\gamma)$.
$(i) \Rightarrow (ii)$:  If $\kar(F)\neq2$ then  $(A,\sigma)\boxtimes (Q,\gamma)$ is equivalent to a totally decomposable orthogonal involution, and the result is a special case of  \cite[Thm.~1]{Becher:qfconj}.  If $\kar(F)=2$ then the result is a special case of \cite[(7.3)]{dolphin:PFC}.%If $\kar(F)=2$, then by \cite[(5.4)]{dolphin:PFC},  $(A,\s)\boxtimes (Q,\gamma)\simeq  (A,\s)\otimes (Q,\gamma,f)$ for any choice of semi-trace $f$ on $(Q,\gamma)$ and hence 
 % adjoint to a Pfister form  by  \cref{thm:pfisterfactquad}.
 %If $\kar(F)\neq2$, then $(A,\s)\boxtimes (Q,\gamma)$ is equivalent to a totally decomposable $F$-algebra with orthogonal involution, and hence   adjoint to a Pfister form by \cite{Becher:qfconj}.

$(ii)\Rightarrow (iii)$: Assume that ${(A,\sigma)}\boxtimes{(Q,\gamma)}\simeq \Ad(\rho)$ for a Pfister form $\rho$ over $F$. 
As $Q_{F(\pi)}$ is split, we have that $(A,\s)_{F(\pi)}$ is hyperbolic and hence   $((A,\s)\boxtimes(Q,\gamma))_{F(\pi)}$ is hyperbolic by   \cref{lemma:canonnicer}. Hence $\rho_{F(\pi)}$ is hyperbolic by \cref{prop:hypiff}$(2)$ and  by  \cite[(23.6)]{Elman:2008}  we  have  $\rho\simeq \psi\otimes\pi$ for some  bilinear Pfister form $\psi$ over $F$. As  $\Ad(\pi)\simeq (Q,\gamma)\boxtimes {(Q,\gamma)}$ by  \cite[(2.9)]{dolphin:conic} we have   ${(A,\sigma)}\boxtimes{(Q,\gamma)}\simeq\Ad(\psi)\otimes (Q,\gamma)\boxtimes (Q,\gamma)$ by \cref{prop:hypiff}$(3)$. Therefore $(A,\s)\simeq \Ad(\psi)\otimes (Q,\gamma)$ by 
\cref{lemma:canonnicer}.

%By \cref{lemma:canonnicer} we have that $(A,\s)\boxtimes(Q,\gamma)$ is hyperbolic if and only if $(A,\s)$ is hyperbolic. 
%As $Q_{F(\pi)}$ is split, and hence $(A,\sigma)_{F(\pi)}$ is hyperbolic,  $((A,\s)\boxtimes(Q,\gamma))_{F(\pi)}$ is hyperbolic, and hence so is $\rho_{F(\pi)}$. By  \cite[(23.6)]{Elman:2008}  we then have  $\rho\simeq \psi\otimes\pi$ for some  bilinear Pfister form $\psi$ over $F$.
  
% We also have $(A,\sigma)\simeq  \Ad(\varphi)\otimes  (Q,\gamma)$ for a nondegenerate  symmetric  bilinear form $\varphi$ over $F$ by \cref{prop:quadextnfullinv}. We may assume $\varphi$ represents $1$. Arguing as in \cref{lemma:canon}, we have  that $(A,\sigma)$ is isotropic or hyperbolic if and only if the same holds for the quadratic form $\varphi\otimes \pi$.
%As  $(A,\sigma)_{F(\rho)}$ is hyperbolic,  so is $(\varphi\otimes \pi)_{F(\rho)}$. 
 % We have that $\dim(\varphi\otimes\pi)=\dim(\rho)$ and it therefore follows from 
  %\cite[(23.6)]{Elman:2008} that $\rho$ is  similar to $\varphi\otimes \pi$.
 %Since $\rho$ is a Pfister form and $\varphi\otimes \pi$ represents $1$, it follows from \cref{cor:roundpf} that $\varphi\otimes\pi\simeq \rho\simeq \psi\otimes\pi$. 
 %Hence by \cref{cor:isomsepqat} we have 
% we have that $(A,\sigma)\simeq \Ad(\varphi)\otimes  (Q,\gamma)\simeq\Ad(\psi)\otimes  (Q,\gamma)$.

$(iii)\Rightarrow (i)$: This follows easily from \cref{prop:hypiff}$(3)$.

$(ii)\Rightarrow (iv)$: Let $K/F$ be a field extension.
As $((A,\s)\boxtimes (Q,\gamma))_K$ is adjoint to a Pfister form, it is either anisotropic or hyperbolic by Propositions
\ref{Pfister}  and \ref{prop:hypiff}$(2)$. Hence $(A,\s)_K$ is either anisotropic or hyperbolic by \cref{lemma:canonnicer}.

%Assume that $(A,\sigma)\simeq \Ad(\psi)\otimes (Q,\gamma)$ for a bilinear Pfister form $\psi$ over $F$. 
%Let $K/F$ be a field extension. If $Q_K$ is split then $(A,\s)_K$ is  hyperbolic. Assume that  $Q_K$ is not split. 
%Then
%by  \cref{prop:hypiff}$(2)$ and \cref{prop:hypiff}$(3)$, we have that $(A,\s)_K$  is anisotropic or hyperbolic if and only if the same  holds for $(\psi\otimes  \qf{1}_{(Q,\gamma)})_K$, which holds if and only if the same holds for 
%$(\psi\otimes \pi)_K$ by  \cref{cor:isom}. Since $(\psi\otimes \pi)_K$ is a Pfister form it is either anisotropic or hyperbolic by   \cite[(9.10)]{Elman:2008} and hence the same holds for $(A,\sigma)_K$.

$(iv)\Rightarrow (ii)$: For a given field extension $K/F$, $(A,\sigma)_K$ is anisotropic or hyperbolic if and only if the same holds for $({(A,\sigma)}\boxtimes {(Q,\gamma)})_K$ by \cref{lemma:canonnicer}. 
Let $\rho$ be a quadratic form over $F$ such that ${(A,\sigma)}\boxtimes{(Q,\gamma)}\simeq \Ad(\rho)$.
Then $\dim(\rho)=\deg(A)=2^n$ and 
we have that $\rho_K$ is either anisotropic or hyperbolic by \cref{prop:hypiff}$(2)$. As this holds for all field extensions $K/F$, we have that $\rho$ is similar to  a Pfister form  by \cref{Pfister} and the result follows from \cref{prop:hypiff}$(1)$.
\end{proof}

\begin{thm} Let $(A,\sigma)$ be a split $F$-algebra with unitary involution with centre $K$ and $\deg(A)=2^n$  with $n\geqslant 1$ and let $\tau$ be the non-trivial $F$-automorphism on $K$. The following are equivalent:
\begin{itemize}
\item[$(i)$] $(A,\sigma)$ is totally decomposable.
\item[$(ii)$] $(A,\sigma)\simeq\Ad(\psi)\otimes (K,\tau)$ for some  bilinear Pfister form $\psi$ over $F$.
\item[$(iii)$] For any field extension $L/F$, $(A,\sigma)_L$ is either anisotropic or hyperbolic.
\end{itemize}
\end{thm}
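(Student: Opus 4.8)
The strategy is to mirror the argument of \cref{prop:symp}, using the unitary analogue of the index-reduction machinery from \S\ref{jacanal}. Here the role of $(Q,\gamma)$ is played by $(K,\tau)$, the role of $\pi=(Q,\Nrd_Q)$ is played by the norm form $\pi=(K,\Nrd_K)$, which is now a $1$-fold Pfister form, and Propositions \ref{prop:quadextnfullinv}, \ref{lemma:canon}, \ref{cor:isomsepqat} apply directly since $(B,\tau)=(K,\tau)$ is precisely the quadratic \'etale case covered throughout \S\ref{jacanal}. First I would prove $(i)\Rightarrow(ii)$: if $(A,\sigma)\simeq\bigotimes_{i=1}^n(Q_i,\sigma_i)$ with common centre $K$, then exactly one factor $(Q_j,\sigma_j)$ must be of the second kind (restricting to $\tau$ on $K$) and the rest are of the first kind. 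Since $A$ is split, grouping the first-kind factors gives a split algebra with involution of the first kind, hence (by \cref{prop:hypiff}$(3)$ and the remark following it) adjoint to a bilinear Pfister form $\psi$ over $F$; the remaining factor $(Q_j,\sigma_j)$, being a split quaternion algebra with unitary involution with centre $K$, is isomorphic to $\Ad(\qf{c}_{(K,\tau)})$ for some $c\in F^\times$, and absorbing the scalar $c$ into $\psi$ (which is harmless up to similarity, or one uses the standard fact that $\qf{c}_{(K,\tau)}\simeq\qf{1}_{(K,\tau)}$ when $c\in F^\times$ and $\pi$ is already universal enough, but more robustly one simply notes $\Ad(\qf{c}_{(K,\tau)})\simeq(K,\tau)$) yields $(A,\sigma)\simeq\Ad(\psi)\otimes(K,\tau)$.

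Next, $(ii)\Rightarrow(i)$ is immediate from \cref{prop:hypiff}$(3)$ and the fact that a bilinear Pfister form decomposes as a tensor product of $1$-fold bilinear Pfister forms, each of which gives a quaternion algebra with involution of the first kind; together with the single factor $(K,\tau)$ (a split quaternion algebra with unitary involution), this exhibits $(A,\sigma)$ as totally decomposable of degree $2^n$. For $(ii)\Rightarrow(iii)$: given a field extension $L/F$, we have $(A,\sigma)_L\simeq\Ad(\psi_L)\otimes(K,\tau)_L$, and by \cref{lemma:canon} this is isotropic (resp.\ hyperbolic) if and only if $\Ad(\psi_L)\otimes\Ad(\pi_L)\simeq\Ad(\psi_L\otimes\pi_L)$ is; since $\psi\otimes\pi$ is a quadratic Pfister form over $F$ (an $n$-fold Pfister form, as $\psi$ is an $(n-1)$-fold bilinear Pfister form and $\pi$ is a $1$-fold quadratic Pfister form), \cref{Pfister} gives that $(\psi\otimes\pi)_L$ is anisotropic or hyperbolic, hence so is $(A,\sigma)_L$.

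Finally $(iii)\Rightarrow(ii)$: by \cref{prop:quadextnfullinv} write $(A,\sigma)\simeq\Ad(\varphi)\otimes(K,\tau)$ for some nondegenerate symmetric bilinear form $\varphi$ over $F$, and set $\rho=\varphi\otimes\pi$, a nonsingular quadratic form over $F$ with $\dim(\rho)=\dim(\varphi)\cdot 2=\deg(A)=2^n$. By \cref{lemma:canon}, for every field extension $L/F$ the form $\rho_L$ is anisotropic or hyperbolic exactly when $(A,\sigma)_L$ is, so hypothesis $(iii)$ and \cref{Pfister} force $\rho$ to be similar to an $n$-fold quadratic Pfister form. One then needs the descent step: a quadratic form similar to a Pfister form that is ``divisible by $\pi$'' is of the form $\psi\otimes\pi$ for a bilinear Pfister form $\psi$ — this is exactly the role played by \cite[(23.6)]{Elman:2008} in the proof of \cref{prop:symp}, applied now at the function field $F(\pi)$ over which $K$ splits and $(A,\sigma)$ becomes hyperbolic, forcing $\rho_{F(\pi)}$ hyperbolic; then $\rho\simeq\psi\otimes\pi$ for some bilinear Pfister form $\psi$, and \cref{cor:isomsepqat} (with $\varphi_1=\varphi$, $\varphi_2=\psi$, noting $\Ad(\varphi)\otimes\Ad(\pi)\simeq\Ad(\rho)\simeq\Ad(\psi\otimes\pi)\simeq\Ad(\psi)\otimes\Ad(\pi)$ up to the similarity factor, which \cref{prop:hypiff}$(1)$ absorbs) yields $(A,\sigma)\simeq\Ad(\psi)\otimes(K,\tau)$, as desired.

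\medskip\noindent
The main obstacle I anticipate is the bookkeeping in $(i)\Rightarrow(ii)$: one must argue carefully that in a totally decomposable unitary algebra with centre $K$ exactly one quaternion factor carries the second-kind involution while the others are of the first kind, and that the split hypothesis lets one replace the first-kind part by the adjoint of a \emph{bilinear Pfister} form (not merely an arbitrary symmetric bilinear form) — this uses that a split totally decomposable algebra with involution of the first kind and symplectic type is hyperbolic, whereas the orthogonal ones are Pfister-adjoint by the Pfister Factor Conjecture; some care is needed to see that the symplectic factors can be absorbed. The descent step in $(iii)\Rightarrow(ii)$ is routine once one passes to $F(\pi)$ and invokes \cite[(23.6)]{Elman:2008}, exactly as in \cref{prop:symp}.
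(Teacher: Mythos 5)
Your directions $(ii)\Rightarrow(i)$, $(ii)\Rightarrow(iii)$ and $(iii)\Rightarrow(ii)$ are essentially the paper's argument (modulo some minor off-by-one slips in dimension counts: with $\dim\psi=\deg A=2^n$ the form $\psi\otimes\pi$ is an $(n+1)$-fold Pfister form, not an $n$-fold one; this does not affect the logic). The paper also explicitly handles the case where $Z(A_L)$ is not a field in $(ii)\Rightarrow(iii)$ and $(iii)\Rightarrow(ii)$, whereas you elide this, but \cref{lemma:canon} takes care of it anyway. So those three implications are fine.

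The gap is in $(i)\Rightarrow(ii)$, and it is a real one, not mere bookkeeping. First, your structural claim that in a decomposition $(A,\sigma)\simeq\bigotimes_{i=1}^n(Q_i,\sigma_i)$ ``with common centre $K$'' exactly one factor is of the second kind and the rest are of the first kind contradicts the paper's definition: the $Q_i$ \emph{all} have centre $K$, hence are all of the second kind. To obtain a product of first-kind $F$-quaternion factors one must first invoke \cite[(2.22)]{Knus:1998}, which rewrites $(A,\sigma)$ as $(Q_1,\sigma_1)\otimes\cdots\otimes(Q_n,\sigma_n)\otimes(K,\tau)$ with $(Q_i,\sigma_i)$ of the first kind over $F$ (and one may normalize them all to be symplectic). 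Second, and more seriously, you then assert that ``since $A$ is split, grouping the first-kind factors gives a split algebra with involution of the first kind'' — but $A$ is split only over $K$; the product $B=Q_1\otimes\cdots\otimes Q_n$ need not be split over $F$. What the hypothesis gives is that $B_K$ is split, whence by \cite[(98.5)]{Elman:2008} $B$ is merely Brauer equivalent to an $F$-quaternion algebra $Q$. One therefore cannot appeal to the split Pfister Factor Conjecture directly; one must apply \cite[Thm.~2]{Becher:qfconj} in the orthogonal case, or \cref{prop:symp} (the companion theorem just proved for index-two symplectic involutions) in the symplectic case, to get $(B,\sigma')\simeq\Ad(\varphi)\otimes(Q,\delta)$ with $\varphi$ a bilinear Pfister form. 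Finally one absorbs $(Q,\delta)\otimes(K,\tau)\simeq\Ad(\psi)\otimes(K,\tau)$ for a $2$-dimensional symmetric bilinear form $\psi$ (this uses \cite[(6.2)]{dolphin:quadpairs}, not a bare dimension count — note a split $K$-quaternion algebra with unitary involution is $8$-dimensional over $F$, so it cannot be $\Ad(\qf{c}_{(K,\tau)})$ as you write) and concludes with $\varphi\otimes\psi$. Your closing remark that ``some care is needed to see that the symplectic factors can be absorbed'' gestures at the issue but supplies neither the index-reduction step nor the appeal to the non-split Pfister Factor Conjecture, which are the substantive ingredients of this implication.
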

\begin{proof}
If $K$ is not a field then $(A,\s)$ and $(K,\tau)$ are both hyperbolic and by \cref{cor:allhypsame} we have $(A,\s)\simeq \Ad(\psi)\otimes (K,\tau)$ for any symmetric bilinear form $\psi$ with $\dim(\psi)=\deg(A)$. Hence the equivalencies all hold trivially. We therefore assume  that $K$ is a field and let $\pi=(K,\Nrd_K)$.

 $(i)\Rightarrow (ii)$: By  \cite[(2.22)]{Knus:1998} 
there exist $F$-quaternion algebras with involution of the first kind $(Q_i,\sigma_i)$ for $i=1,\ldots, n$ such that 
 $$(A,\sigma)\simeq (Q_1,\sigma_1)\otimes\ldots\otimes(Q_n,\sigma_n)\otimes(K,\tau)\,.$$ Again by \cite[(2.22)]{Knus:1998}, we may  assume that $(Q_i,\sigma_i)$ is symplectic for  $i=1,\ldots,n$, and hence we may assume that  $(B,\sigma')=(Q_1,\sigma_1)\otimes\ldots\otimes(Q_n,\sigma_n)$ is symplectic if $\kar(F)=2$ by  \cite[(2.23)]{Knus:1998}.
Since $B_K$ is split by the hypothesis, it follows from \cite[(98.5)]{Elman:2008} that $B$ is Brauer equivalent to an $F$-quaternion algebra $Q$. It follows by  \cite[Thm.~2]{Becher:qfconj} if $(B,\sigma')$ is orthogonal and by  \cref{prop:symp} if $(B,\sigma')$ is symplectic that $(B,\sigma')\simeq  \Ad(\varphi)\otimes (Q,\delta)$ for some bilinear Pfister form $\varphi$ over $F$, and some $F$-quaternion algebra with involution $(Q,\delta)$.
Hence $(A,\sigma)\simeq  \Ad(\varphi)\otimes (Q,\delta)\otimes (K,\tau)$. We have that $ (Q,\delta)\otimes (K,\tau)\simeq  \Ad(\psi)\otimes (K,\tau)$ for some $2$-dimensional symmetric bilinear form $\psi$ over $F$ by  \cite[(6.2)]{dolphin:quadpairs}. Hence by \cref{prop:hypiff}$(3)$, $(A,\sigma)\simeq  \Ad(\varphi\otimes\psi)\otimes (K,\tau)$ for the   bilinear Pfister form $\varphi\otimes\psi$ over $F$.

$(ii)\Rightarrow (i)$: This follows  from \cref{prop:hypiff}$(3)$.

$(ii)\Rightarrow (iii)$: Let $L/F$ be a field extension. 
If $Z(A_L)$ is not a field then $(A,\s)_L$ is hyperbolic. Otherwise  $(A,\s)_L$ is  anisotropic or hyperbolic if and only if $(\psi\otimes \pi)_L$ is hyperbolic
by Propositions  \ref{lemma:canon} and \ref{prop:hypiff}$(2)$. The result follows from  \cref{Pfister}.

$(iii)\Rightarrow (ii)$: By  \cref{prop:quadextnfullinv}  we have $(A,\sigma)\simeq \Ad(\varphi)\otimes (K,\tau)$ for some symmetric bilinear form $\varphi$ over $F$ with $\dim(\varphi)=\deg(A)$. Let $\rho=\varphi\otimes \pi$. 
Let $L/K$ be a field extension. 
If $K\otimes_FL=Z(A_L)$ is not a field then $\pi_L$ is hyperbolic.
Otherwise $\rho_L$ is anisotropic or hyperbolic if and only if $(A,\s)_L$ is anisotropic or hyperbolic  by \cref{lemma:canon}. In both cases, $\rho_L$ is  either anisotropic or hyperbolic.  As this holds for any field extension $L/F$,  $\rho$ is similar to a Pfister form by \cref{Pfister}. Similarly,  $\rho_{F(\pi)}$ is isotropic, and hence hyperbolic. It then follows from \cite[(23.6) and (24.1)]{Elman:2008} that $\rho\simeq c\psi\otimes\pi$
 for some bilinear Pfister form $\psi$ over $F$ and some $c\in F^\times$. 
 Hence  by \cref{prop:hypiff}$(1)$, $\Ad(\rho)\simeq \Ad(\varphi)\otimes \Ad(\pi)\simeq \Ad(\psi)\otimes \Ad(\pi)$. 
It follows that  $\Ad(\varphi)\otimes (K,\tau)\simeq \Ad(\psi)\otimes (K,\tau)$ by  \cref{cor:isomsepqat}, as required.\end{proof}

\small{}

\end{document}